\newtheorem{theorem}{Theorem}[section]
\newtheorem{lemma}[theorem]{Lemma}
\newtheorem{proposition}[theorem]{Proposition}
\newtheorem{corollary}[theorem]{Corollary}
\theoremstyle{definition}
\newtheorem{definition}[theorem]{Definition}
\theoremstyle{remark}
\newtheorem{remark}[theorem]{Remark}
\newcommand{\C}{\mathcal C}
\newcommand{\T}{\mathcal T}
\newcommand{\V}{\mathcal V}
\newcommand{\NN}{\mathbb N}
\newcommand{\aA}{\mathfrak A}
\newcommand{\iso}{\cong}
\newcommand{\NO}{\mathcal{NO}}
\newcommand{\PO}{\mathcal{PO}}
\newcommand{\TNO}{\mathcal{TNO}}
\newcommand{\TPO}{\mathcal{TPO}}
\newcommand{\TNR}{\mathcal{TNR}}
\newcommand{\CR}{\mathcal{CR}}
\newcommand{\IR}{\mathcal{IR}}
\newcommand{\CCR}{\mathcal{CCR}}
\newcommand{\cat}{\sf}
\newcommand{\meet}{\wedge}
\newcommand{\Meet}{\bigwedge}
\newcommand{\join}{\vee}
\newcommand{\union}{\cup}
\newcommand{\inter}{\cap}
\newcommand{\Inter}{\bigcap}
\renewcommand{\:}{\colon}
\newcommand{\Not}{\neg}
\newcommand{\cl}{\mathrm{cl}}
\newcommand{\EV}{\quad\Longleftrightarrow\quad}
\begin{document}

\title{On the structure of modal and tense \\ operators on a boolean algebra}

\author{Guram Bezhanishvili}
\address{New Mexico State University}
\email{guram@nmsu.edu}

\author{Andre Kornell}
\address{Dalhousie University}
\email{akornell@dal.ca}

\thanks{Andre Kornell was supported by the Air Force Office of Scientific Research under Award No.~FA9550-21-1-0041.}

\subjclass[2020]{03B45; 03B44; 06E25; 06E15; 54G05; 54B20; 18F70; 06D22}
\keywords{Modal logic; temporal logic; Boolean algebra with operators; J\'onsson-Tarski duality; Stone space; extremally disconnected space; Stone-\v Cech compactification; Vietoris space; pointfree topology}

\begin{abstract}
    We initiate the study of the poset $\NO(B)$ of necessity operators on a boolean algebra $B$. We show that $\NO(B)$ is a meet-semilattice that need not be distributive. However, when $B$ is complete, $\NO(B)$ is necessarily a frame, which is spatial iff $B$ is atomic. In that case, $\NO(B)$ is a locally Stone frame. Dual results hold for the poset $\PO(B)$ of possibility operators. We also obtain similar results for the posets $\TNO(B)$ and $\TPO(B)$ of tense necessity and possibility operators on $B$. Our main tool is J\'onsson-Tarski duality, by which such operators correspond to continuous and interior relations on the Stone space of $B$.
\end{abstract}

\maketitle

\section{Introduction}

In the algebraic study of modal logic, necessity ($\Box$) is modeled by a function on a boolean algebra preserving finite meets, possibility ($\Diamond$) by a function preserving finite joins, and the two are related by $\Box=\lnot\Diamond\lnot$. In relational semantics, the necessity and possibility operators are modeled by a binary relation $R$ on a set $X$ of possible worlds. More precisely, for $x\in X$,   
\begin{eqnarray*}
    x\models\Box\varphi & \Longleftrightarrow & (\forall y)(x \mathrel{R} y \Longrightarrow y\models\varphi); \\
    x\models\Diamond\varphi & \Longleftrightarrow & (\exists y)(x \mathrel{R} y \mbox{ and } y\models\varphi).
\end{eqnarray*}

The algebraic and relational approaches are related to each other by the well-known {\em J\'onsson-Tarski duality}. 
We recall (see, e.g., \cite{CZ1997}*{p.~214}) that a {\em modal algebra} is a pair $(B,\Box)$ where $B$ is a boolean algebra and $\Box$ is a unary function on $B$ preserving finite meets or, equivalently, a pair $(B,\Diamond)$ where $\Diamond$ is a unary function preserving finite joins. A {\em modal homomorphism} between two modal algebras $(B_1,\Box_1)$ and $(B_2,\Box_2)$ is a boolean homomorphism $h\:B_1\to B_2$ such that $h(\Box_1 a)=\Box_2 h(a)$ or, equivalently, $h(\Diamond_1 a)=\Diamond_2 h(a)$ for each $a\in B_1$. Let $\cat{MA}$ be the category of modal algebras and modal homomorphisms.

To define the dual category of descriptive frames, we recall that a binary relation $R\subseteq X\times Y$ between Stone spaces $X$ and $Y$ is a {\em continuous relation} if $R[x]$ is closed for each $x\in X$ and $R^{-1}[U]$ is clopen for each clopen $U\subseteq Y$. A continuous function $X \to Y$ is then just a continuous relation that is also a function.

A {\em descriptive frame} is a pair $(X,R)$ where $X$ is a Stone space and $R$ is a continuous relation on $X$. 
Let $(X_1,R_1)$ and $(X_2,R_2)$ be two descriptive frames. A map $f\:X_1\to X_2$ is a {\em bounded morphism} or a {\em p-morphism} if $fR_1[x]=R_2[f(x)]$ for each $x\in X$. Let $\cat{DF}$ be the category of descriptive frames and continuous bounded morphisms.

\begin{theorem} [J\'onsson-Tarski duality]
    $\cat{MA}$ is dually equivalent to $\cat{DF}$.
\end{theorem}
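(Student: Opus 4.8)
The plan is to derive the duality from Stone duality between the category $\cat{BA}$ of boolean algebras and the category $\cat{Stone}$ of Stone spaces, by layering the modal structure onto each side. Recall that Stone duality sends a boolean algebra $B$ to its space $X_B$ of ultrafilters, topologized by the clopen sets $\widehat a = \{u\in X_B : a\in u\}$, sends a Stone space $X$ to the boolean algebra $\mathrm{Clop}(X)$ of clopen subsets, and on arrows sends a boolean homomorphism $h\colon B_1\to B_2$ to the continuous map $f_h\colon X_{B_2}\to X_{B_1}$ with $f_h(u)=h^{-1}(u)$, and a continuous map $g\colon X\to Y$ to the boolean homomorphism $g^{-1}\colon\mathrm{Clop}(Y)\to\mathrm{Clop}(X)$.

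On objects, given a modal algebra $(B,\Box)$ --- equivalently $(B,\Diamond)$ with $\Diamond=\Not\Box\Not$ --- define a relation $R_\Box$ on $X_B$ by
\[
    u\, R_\Box\, v \EV (\forall a\in B)(\Box a\in u \Rightarrow a\in v),
\]
equivalently $u\,R_\Box\,v$ iff $a\in v$ implies $\Diamond a\in u$. Then $R_\Box[u]=\Inter\{\widehat a : \Box a\in u\}$ is closed, being an intersection of clopen sets, and the key computation is the identity $R_\Box^{-1}[\widehat a]=\widehat{\Diamond a}$, which in particular shows $R_\Box^{-1}[\widehat a]$ is clopen, so $(X_B,R_\Box)$ is a descriptive frame. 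Conversely, given a descriptive frame $(X,R)$, the continuity of $R$ ensures that $\Diamond_R U := R^{-1}[U]$ is clopen whenever $U$ is, and since $R^{-1}$ preserves finite unions and the empty set, $(\mathrm{Clop}(X),\Diamond_R)$ is a modal algebra.

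The inclusion $R_\Box^{-1}[\widehat a]\subseteq\widehat{\Diamond a}$ is immediate from the second description of $R_\Box$, and the reverse inclusion is the one step that requires genuine work: given an ultrafilter $u$ with $\Diamond a\in u$, one must produce an ultrafilter $v$ with $a\in v$ and $u\,R_\Box\,v$. Here I would verify that $\{a\}\cup\{b : \Box b\in u\}$ has the finite intersection property, using that $\Box$ preserves finite meets: if $\Box b_1,\dots,\Box b_n\in u$ and $a\meet b_1\meet\cdots\meet b_n=0$, then $b_1\meet\cdots\meet b_n\le\Not a$, hence $\Box(b_1\meet\cdots\meet b_n)\le\Box\Not a=\Not\Diamond a$, contradicting $\Box(b_1\meet\cdots\meet b_n)=\Box b_1\meet\cdots\meet\Box b_n\in u$ and $\Diamond a\in u$; one then extends the resulting filter to an ultrafilter $v$ by the ultrafilter lemma. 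This is the main obstacle, and it is precisely where the axiom that $\Box$ preserves finite meets (equivalently, that $\Diamond$ preserves finite joins, including $\Diamond 0=0$) is indispensable.

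It remains to make these assignments functorial and check that they are mutually inverse. The composites are controlled by Stone duality together with the identity $R_\Box^{-1}[\widehat a]=\widehat{\Diamond a}$ --- which says that the Stone isomorphism $B\iso\mathrm{Clop}(X_B)$ carries $\Diamond$ to $\Diamond_{R_\Box}$ --- and the analogous statement recovering $R$ from $\Diamond_R$. On morphisms, given a boolean homomorphism $h\colon B_1\to B_2$ with dual continuous map $f_h$, a straightforward unravelling shows that $h$ commutes with the $\Diamond$'s if and only if $f_h$ is a bounded morphism, i.e.\ $f_hR_2[u]=R_1[f_h(u)]$ for all $u$; one inclusion follows directly from the definitions (using compactness of $R_2[u]$ and continuity of $f_h$ to see that its image is closed), and the other again rests on the ultrafilter-extension argument above. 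Naturality of the comparison isomorphisms is inherited from Stone duality. Assembling everything yields mutually inverse contravariant functors between $\cat{MA}$ and $\cat{DF}$, hence the asserted dual equivalence.
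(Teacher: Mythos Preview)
The paper does not supply a proof of this theorem; it is stated as a known result, attributed to J\'onsson and Tarski \cite{JT1951}, with a further pointer to \cite{CZ1997}*{Sec.~8.2} in the proof of the subsequent \cref{lem: JT}. Your sketch follows the standard textbook route and is correct: the crucial ingredients are exactly the identity $R_\Box^{-1}[\widehat a]=\widehat{\Diamond a}$, established via the finite-intersection-property argument using that $\Box$ preserves finite meets, and the analogous ultrafilter-extension argument for the ``back'' half of the bounded-morphism condition. There is nothing to compare beyond noting that the paper treats this duality as background.
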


The above duality originates in the well-known paper by J\'onsson and Tarski \cite{JT1951} and is a generalization of the celebrated Stone duality between boolean algebras and Stone spaces. We recall that if $(X,R)$ is a descriptive frame, then the necessity and possibility operators on the boolean algebra ${\rm Clop}(X)$ of clopen subsets of $X$ are defined by
\begin{eqnarray*}
    \Box_R U & = & \{ x \in X \mid R[x] \subseteq U \} = X \setminus R^{-1}[X \setminus U], \\
    \Diamond_R U & = & \{ x \in X \mid R[x] \cap U \ne \varnothing \} = R^{-1}[U]
\end{eqnarray*}
for each $U\in{\rm Clop}(X)$. These formulas interpret $\Box$ and $\Diamond$ in a descriptive frame.

\begin{definition}
    Let $B$ be a boolean algebra. A \emph{necessity operator} on $B$ is a unary function that preserves finite meets, and a \emph{possibility operator} on $B$ is a unary function that preserves finite joins.
    Let $\NO(B)$ be the set of these necessity operators and $\PO(B)$ the set of these possibility operators.
\end{definition}

We order both $\NO(B)$ and $\PO(B)$ by 
\[
f \le g \mbox{ iff } f(a) \le g(a) \mbox{ for all } a \in B.
\]
For this order, $\NO(B)$ and $\PO(B)$ are posets that are dual to each other, and it is straightforward to see that $\Box \mapsto \lnot \Box \lnot$ is the desired dual isomorphism.
Because of this, we will mainly concentrate on $\NO(B)$. Clearly, each result we obtain about $\NO(B)$ has a dual counterpart for $\PO(B)$, but we won't state them all explicitly. 

It is worth noting that $\NO(B)$ can be thought of as the set of endomorphisms of $B$ in the appropriate category. For this we recall another generalization of Stone duality developed by Halmos \cite{Hal1956}. A map $h\:B_1\to B_2$ between two boolean algebras is a {\em meet-hemimorphism} or a {\em join-hemimorphism} if it preserves finite meets or finite joins, respectively. In what follows, we will mainly work with meet-hemimorphisms and simply call them {\em hemimorphisms}. Let $\cat{BA^H}$ be the category of boolean algebras and hemimorphisms between them. Also, let $\cat{Stone^C}$ be the category of Stone spaces and continuous relations between them.

\begin{theorem} [Halmos duality]
    $\cat{BA^H}$ is dually equivalent to $\cat{Stone^C}$.
\end{theorem}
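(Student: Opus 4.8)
The plan is to prove this by extending the clopen-algebra functor of Stone duality to a contravariant functor $\mathrm{Clop}\colon\cat{Stone^C}\to\cat{BA^H}$ and checking that it is full, faithful, and essentially surjective; such a functor is a dual equivalence. Throughout I regard a continuous relation $R\subseteq X\times Y$ as a morphism $X\to Y$ in $\cat{Stone^C}$, with composition $(S\circ R)[x]=S[R[x]]$, and I use Stone duality for the underlying categories.

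I would first define the functor on morphisms: for a continuous relation $R\subseteq X\times Y$, put $\mathrm{Clop}(R)=\Box_R\colon\mathrm{Clop}(Y)\to\mathrm{Clop}(X)$, where $\Box_R U=\{x\mid R[x]\subseteq U\}=X\setminus R^{-1}[Y\setminus U]$. Continuity of $R$ makes $\Box_R U$ clopen, and $\Box_R$ preserves finite meets --- the empty meet included, since $\Box_R Y=X$ --- so it is a hemimorphism. Functoriality is set-theoretic: $\Box_{\Delta_X}=\mathrm{id}$ and $\Box_{S\circ R}=\Box_R\circ\Box_S$ follow from $(S\circ R)[x]=S[R[x]]$, and when $R$ is the graph of a continuous function $f$ one gets $\Box_R=f^{-1}[-]$, so the assignment extends Stone's functor.

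Essential surjectivity is inherited from Stone duality. Faithfulness is immediate: since closed subsets of a Stone space are intersections of their clopen neighbourhoods, $R[x]=\bigcap\{U\in\mathrm{Clop}(Y)\mid x\in\Box_R U\}$, so $R$ is recovered from $\Box_R$. For fullness, given a hemimorphism $h\colon\mathrm{Clop}(Y)\to\mathrm{Clop}(X)$, I would set $R[x]=\bigcap\{U\in\mathrm{Clop}(Y)\mid x\in h(U)\}$, a closed set, and prove $\Box_R=h$: for clopen $U$, the inclusion $R[x]\subseteq U$ holds iff, by compactness of $Y$, there are $U_1,\dots,U_n$ with $x\in h(U_i)$ and $U_1\inter\dots\inter U_n\subseteq U$, and then $x\in h(U_1)\inter\dots\inter h(U_n)=h(U_1\inter\dots\inter U_n)\subseteq h(U)$; the reverse inclusion is trivial (take $U_1=U$). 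Continuity of $R$ then follows, since $R^{-1}[V]=X\setminus\Box_R(Y\setminus V)=X\setminus h(Y\setminus V)$ is clopen. So $\mathrm{Clop}$ is full, faithful, and essentially surjective, which is the claimed dual equivalence.

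I expect the fullness step --- recovering a continuous relation from an arbitrary hemimorphism and checking $\Box_R=h$ --- to be the only real obstacle; everything else is bookkeeping on top of Stone duality. It is precisely here that the topology enters: compactness of the Stone space together with the hypothesis that $h$ preserves finite meets. The construction can equally be phrased on ultrafilters, $u\,R\,v\iff\{a\mid h(a)\in u\}\subseteq v$, in which case the same point becomes the familiar existence lemma: if $\lnot h(\lnot a)\in u$, then $\{b\mid h(b)\in u\}\cup\{a\}$ has the finite meet property and so extends to an ultrafilter $v$ with $u\,R\,v$ and $a\in v$.
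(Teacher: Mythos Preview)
Your proof is correct and follows the standard route to Halmos duality: extend Stone's clopen functor to relations via $R\mapsto\Box_R$, then verify full, faithful, and essentially surjective. The fullness argument via compactness and the ultrafilter reformulation you sketch are both the expected ones.

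The paper, however, does not prove this theorem at all: it is stated as a known result with a citation to Halmos \cite{Hal1956} and used as background for the rest of the paper. So there is no proof in the paper to compare against; your proposal supplies exactly the standard argument that the paper elides.
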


Clearly, $\NO(B)$ is the set of endomorphisms of $B$ in $\cat{BA^H}$. Dually, for a Stone space $X$, let $\CR(X)$ be the set of endomorphisms on $X$ in $\cat{Stone^C}$.
In other words, $\CR(X)$ is the set of continuous relations on $X$. Clearly $\CR(X)$ is a poset with respect to inclusion.

The next lemma is a consequence of J\'onsson-Tarski duality.
Recall that the Stone space $X$ of a boolean algebra $B$ is the space of ultrafilters of $B$ whose topology is generated by the basis $\{ \sigma(a) \mid a \in B\}$, where $\sigma(a) :=\{x \in X : a \in x\}$. Then $\sigma\:B\to{\rm Clop}(X)$ is the Stone isomorphism between $B$ and the boolean algebra ${\rm Clop}(X)$ of clopen subsets of $X$.

\begin{lemma} \label{lem: JT}
Let $B$ be a boolean algebra, and let $X$ be its Stone space. For $\Box\in\NO(B)$, define $R_\Box$ on $X$ by $x \ R_\Box \ y$ 
iff $\Box^{-1}[x]\subseteq y$ for all $x,y\in X$. The assignment $\Box \mapsto R_\Box$ is a dual isomorphism between the posets $\NO(B)$ and $\CR(X)$. 
\end{lemma}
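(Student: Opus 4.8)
The plan is to prove the lemma by exhibiting the inverse of $\Box \mapsto R_\Box$ explicitly and then checking bijectivity and order-reversal directly, reading off what is needed from Stone duality. Throughout I would identify $B$ with $\mathrm{Clop}(X)$, writing $\widehat a$ for the clopen corresponding to $a \in B$ and treating the points of $X$ as ultrafilters of $B$. The candidate inverse is $R \mapsto \Box_R$, with $\Box_R U = X \setminus R^{-1}[X \setminus U]$ as in the discussion of descriptive frames above; since $R \in \CR(X)$ is a continuous relation, $\Box_R$ carries clopens to clopens and plainly preserves finite meets, so $\Box_R \in \NO(B)$.

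First I would check that $R_\Box$ is indeed a continuous relation and that $\Box_{R_\Box} = \Box$. From the definition, $R_\Box[x] = \bigcap\{\widehat a : \Box a \in x\}$, an intersection of clopens and hence closed. The key identity is
\[
\widehat{\Box a} \;=\; \{x \in X : R_\Box[x] \subseteq \widehat a\} \;=\; X \setminus R_\Box^{-1}[X \setminus \widehat a],
\]
which, together with the closedness of $R_\Box[x]$, at once shows that $R_\Box^{-1}$ carries clopens to clopens (so $R_\Box \in \CR(X)$) and that $\Box_{R_\Box} = \Box$. The inclusion $\widehat{\Box a} \subseteq \{x : R_\Box[x] \subseteq \widehat a\}$ is immediate. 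For the converse I would argue contrapositively: if $\Box a \notin x$, then the filter generated by $\Box^{-1}[x] \cup \{\neg a\}$ is proper, because any $b_1, \dots, b_n \in \Box^{-1}[x]$ with $b_1 \meet \cdots \meet b_n \le a$ would give $\Box b_1 \meet \cdots \meet \Box b_n = \Box(b_1 \meet \cdots \meet b_n) \le \Box a$ and hence $\Box a \in x$, a contradiction; extending the filter to an ultrafilter $y \ni \neg a$ with $\Box^{-1}[x] \subseteq y$ then witnesses $R_\Box[x] \not\subseteq \widehat a$. This witness construction --- the only place where the hypothesis that $\Box$ preserves finite meets is actually used --- is the main obstacle; everything else is bookkeeping.

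Next I would verify $R_{\Box_R} = R$. Unwinding the definitions, $x \mathrel{R_{\Box_R}} y$ holds iff every clopen $U$ with $R[x] \subseteq U$ contains $y$, i.e.\ iff $y$ lies in the intersection of all clopen neighbourhoods of the closed set $R[x]$; in the Stone space $X$ that intersection is $R[x]$ itself, so $x \mathrel{R_{\Box_R}} y$ iff $x \mathrel{R} y$. Combined with the previous paragraph, this shows $\Box \mapsto R_\Box$ and $R \mapsto \Box_R$ are mutually inverse bijections between $\NO(B)$ and $\CR(X)$. (Alternatively, the bijection can be extracted directly from J\'onsson--Tarski duality: a $\Box \in \NO(B)$ makes $B$ into a modal algebra whose dual descriptive frame has underlying space $X$ and relation $R_\Box$, so the object part of the duality already pairs $\NO(B)$ with $\CR(X)$, and only the order computation below needs to be added.)

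Finally I would check order-reversal. If $\Box_1 \le \Box_2$ then $\Box_1 a \le \Box_2 a$ for all $a$, so $\Box_1^{-1}[x] \subseteq \Box_2^{-1}[x]$ for every ultrafilter $x$ (ultrafilters are up-sets); hence $\Box_2^{-1}[x] \subseteq y$ forces $\Box_1^{-1}[x] \subseteq y$, that is, $R_{\Box_2} \subseteq R_{\Box_1}$. Conversely, $R \subseteq S$ gives $\Box_S \le \Box_R$ straight from the formula for $\Box_R$, so if $R_{\Box_2} \subseteq R_{\Box_1}$ then $\Box_1 = \Box_{R_{\Box_1}} \le \Box_{R_{\Box_2}} = \Box_2$. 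Thus $\Box \mapsto R_\Box$ is an order-reversing bijection with order-reversing inverse, i.e.\ a dual isomorphism of posets, as claimed.
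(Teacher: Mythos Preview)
Your proof is correct and follows essentially the same approach as the paper's: the order-reversal arguments for $\Box_1 \le \Box_2 \Rightarrow R_{\Box_2} \subseteq R_{\Box_1}$ and $R \subseteq S \Rightarrow \Box_S \le \Box_R$ are virtually identical. The only difference is that the paper outsources the bijectivity of $\Box \mapsto R_\Box$ to J\'onsson--Tarski duality via a citation, whereas you spell out the mutual-inverse verification (including the standard prime-filter construction for $\widehat{\Box a} = \{x : R_\Box[x] \subseteq \widehat a\}$) and then note, in your parenthetical, precisely the shortcut the paper actually takes; your version is therefore more self-contained but otherwise the same.
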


\begin{proof}
    The assignment $\Box \mapsto R_\Box$ is a bijection $\NO(B) \to \CR(X)$ by J\'onsson-Tarski duality (see, e.g., \cite{CZ1997}*{Sec.~8.2}). Its inverse is $R \mapsto \Box_R$, where $\Box_R a =\sigma^{-1}(X\setminus R^{-1}[X\setminus\sigma(a)])$.
    We show that this bijection is a dual isomorphism.
   
    Let $\Box_1,\Box_2\in\NO(B)$, and assume $\Box_1\le \Box_2$. We prove that $R_{\Box_2} \subseteq R_{\Box_1}$. 
    Suppose $x,y\in X$ are such that $x \ R_{\Box_2} \ y$, and let $a \in \Box_1^{-1}[x]$. Since $\Box_1 a \in x$ and $\Box_1 a\le \Box_2 a$, we have $\Box_2 a\in x$, so $a\in y$ as $x \ R_{\Box_2} \  y$. This proves that $x \ R_{\Box_1} \ y$, and so $R_{\Box_2} \subseteq R_{\Box_1}$. Next, let $R,S\in\CR(X)$, and assume $R\subseteq S$. We prove that $\Box_S\le \Box_R$. Suppose $a\in B$. From $R \subseteq S$ it follows that $R^{-1}[X \setminus \sigma(a)] \subseteq S^{-1}[X\setminus \sigma(a)]$. Therefore, $X\setminus S^{-1}[X\setminus \sigma(a)] \subseteq X\setminus R^{-1}[X\setminus \sigma(a)]$, and so $\Box_S a \le \Box_R a$ because $\sigma$ is an isomorphism. Thus, $\Box_S \le \Box_R$.
\end{proof}

Consequently, the poset $\PO(B)$ is isomorphic to the poset $\CR(X)$. We study $\NO(B)$ and hence $\PO(B)$
by studying $\CR(X)$. 
For an arbitrary Stone space $X$, we show that $\CR(X)$ is a join-semilattice that is not necessarily distributive. Thus, $\NO(B)$ is a meet-semilattice that is not necessarily distributive.
On the other hand, when $X$ is extremally disconnected, we show that $\CR(X)$ is actually a coframe, and hence for a complete boolean algebra $B$, we obtain that $\NO(B)$ is a frame, albeit not a spatial one. In fact, it is spatial iff $B$ is furthermore atomic, in which case $\NO(B)$ is even a locally Stone frame. It is a boolean algebra iff $B$ is finite.

In the last section of the paper, we also study the poset of tense operators on a boolean algebra $B$. Tense operators are used in modal logic to model future and past modalities. A \emph{tense possibility operator} is a possibility operator that has a conjugate in the sense of J\'onsson and Tarski \cite{JT1951}, and a \emph{tense necessity operator} is simply the dual of a tense possibility operator. 

Tense operators on $B$ dually correspond to those continuous relations $R$ on the Stone space $X$ of $B$ that additionally satisfy the condition that $R[U]$ is clopen for each clopen $U\subseteq X$. Following \cite{BGHJ2019}*{Def.~5.1}, we refer to such relations as {\em interior relations}. This name is motivated by the fact that if $R$ is a function, then it is an interior relation iff it is an interior function (that is, a continuous and open function \cite{RS1963}*{p.~99}). If $R$ is an interior relation on $X$, then the tense operators $\Box_F$~(future necessity), $\Diamond_F$~(future possibility), $\Box_P$~(past necessity), and $\Diamond_P$~(past possibility) are defined on ${\rm Clop}(X)$ as follows:
\begin{eqnarray*}
    \Diamond_F U &=& R^{-1}[U], \quad \Box_F U = X \setminus R^{-1}[X \setminus U], \\
    \Diamond_P U &=& R[U], \quad \quad \Box_P U = X \setminus R[X \setminus U]
\end{eqnarray*}
for each $U\in{\rm Clop}(X)$.

Let $\TNO(B)$ be the poset of tense necessity operators on $B$. Similarly to \cref{lem: JT}, $\TNO(B)$ is dual to the poset $\IR(X)$ of interior relations on the Stone space $X$ of $B$. Using this correspondence, we will show that the results we obtained for $\NO(B)$ have obvious analogues for $\TNO(B)$. In particular, we will prove that in general $\TNO(B)$ is only a meet-semilattice that is not necessarily distributive. However, when $B$ is a complete boolean algebra, then $\TNO(B)$ is a frame, which is spatial iff $B$ is furthermore atomic. In such a case, $\TNO(B)$ is itself a complete and atomic boolean algebra. Of course, dual results hold for the poset $\TPO(B)$ of tense possibility operators on $B$.

\section{Basic structure of $\NO(B)$}

In this section, we prove that for each boolean algebra $B$, the poset $\NO(B)$ is a bounded meet-semilattice but that, in general, $\NO(B)$ may be neither distributive nor a lattice. Proving that $\NO(B)$ is a bounded meet-semilattice is straightforward. That $\NO(B)$ may be neither distributive nor a lattice is proved by first showing that there is a Stone space $X$ such that the join-semilattice $\CR(X)$ is neither distributive nor a lattice, and then invoking \cref{lem: JT}.

\begin{proposition}\label{prop: semilattice}
For each boolean algebra $B$, the poset $\NO(B)$ is a bounded meet-semilat\-tice.
\end{proposition}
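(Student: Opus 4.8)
The plan is to verify directly that every piece of structure claimed for $\NO(B)$ is computed pointwise, using only the definition of the order given above. Concretely, I would establish three things: that the pointwise meet of two necessity operators is again a necessity operator, hence their infimum in $\NO(B)$; that the constant function with value $1$ is a necessity operator and is the greatest element; and that the function sending $1$ to $1$ and every other element of $B$ to $0$ is a necessity operator and is the least element.

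For the meet, given $f,g\in\NO(B)$ I would put $h(a)=f(a)\meet g(a)$ and check $h\in\NO(B)$ by rearranging meets, namely
\[
h(a\meet b)=f(a\meet b)\meet g(a\meet b)=\big(f(a)\meet g(a)\big)\meet\big(f(b)\meet g(b)\big)=h(a)\meet h(b),
\]
where the middle equality uses that $f$ and $g$ preserve binary meets together with commutativity and associativity of $\meet$, and $h(1)=f(1)\meet g(1)=1$ takes care of the empty meet. Since the order on $\NO(B)$ is the restriction of the pointwise order on $B^B$, this $h$ is then automatically the greatest lower bound of $f$ and $g$ inside $\NO(B)$.

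For the bounds, the constant function $\top$ with $\top(a)=1$ trivially preserves all finite meets and satisfies $f\le\top$ for every $f\in\NO(B)$, so it is the top. For the bottom, let $\bot$ be defined by $\bot(1)=1$ and $\bot(a)=0$ for $a\ne 1$. The only step in the whole argument calling for a moment's thought is that $\bot$ preserves binary meets: this holds because $a\meet b=1$ forces $a=b=1$, so both $\bot(a\meet b)$ and $\bot(a)\meet\bot(b)$ equal $1$ when $a=b=1$ and equal $0$ otherwise; and $\bot(1)=1$ handles the empty meet, so $\bot\in\NO(B)$. Finally $\bot\le f$ for every $f\in\NO(B)$ since necessarily $f(1)=1$. (Alternatively, $\top$ and $\bot$ can be recognized through \cref{lem: JT} as the operators dual to the empty relation and to the full relation $X\times X$ on the Stone space $X$ of $B$.) I do not expect any genuine obstacle: the proposition amounts to exactly the routine bookkeeping above, the mildly non-obvious point being the verification that $\bot$ is a necessity operator.
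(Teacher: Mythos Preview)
Your proposal is correct and follows exactly the paper's approach: the paper's proof states that the top maps everything to $1$, the bottom maps $1$ to $1$ and everything else to $0$, and the meet is given componentwise. You have simply filled in the routine verifications that the paper omits.
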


\begin{proof}
The top element of $\NO(B)$ maps everything to $1$, and the bottom element 
maps $1$ to $1$ and everything else to $0$. For $f, g \in \NO(B)$, the meet $f \meet g$ is given componentwise, i.e., $(f \meet g)(a) = f(a) \meet g(a)$ for all $a \in B$.
\end{proof}

The above proposition together with \cref{lem: JT} yield that $\CR(X)$ is a bounded join-semilattice for each Stone space $X$. We can also observe this directly: the bottom element of $\CR(X)$ is the empty relation, the top element 
is all of $X \times X$, and the join of $R,S \in \CR(X)$ is $R \union S$.
We next show that $\CR(X)$ may in general be neither distributive nor a lattice. For this we require a series of lemmas.

For each space $X$, let $D_X = \{ (x,x) \mid x \in X \}$ be the diagonal relation on $X$.

\begin{lemma} \label{lem: CR under D}
Let $X$ be a Stone space. The continuous relations underneath the diagonal $D_X \subseteq X \times X$ are exactly the relations of the form $D_X \inter (U \times U)$, where $U \subseteq X$ is clopen.
\end{lemma}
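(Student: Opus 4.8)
The plan is to prove both inclusions directly from the definition of a continuous relation, and the argument is short enough that no auxiliary lemmas are needed.

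First I would treat the easier ``only if'' direction in the form of a closure property: given a clopen set $U \subseteq X$, put $R = D_X \inter (U \times U) = \{(x,x) : x \in U\}$. Then $R[x] = \{x\}$ if $x \in U$ and $R[x] = \varnothing$ otherwise, so every $R[x]$ is closed; and for an arbitrary clopen $V \subseteq X$ a direct computation gives $R^{-1}[V] = U \inter V$, which is clopen. Hence $R$ is a continuous relation, and it is obviously contained in $D_X$. This shows that every relation of the stated form does occur.

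For the converse, let $R \subseteq D_X$ be a continuous relation. Since $R[x] \subseteq \{x\}$ for every $x \in X$, we have the dichotomy $R[x] \in \{\varnothing, \{x\}\}$. Set $U = R^{-1}[X] = \{x \in X : R[x] \ne \varnothing\}$; because $X$ is clopen in itself and $R$ is a continuous relation, $U$ is clopen. By the dichotomy, $x \in U$ exactly when $x \mathrel{R} x$, and since $R$ lies under the diagonal this means $R = \{(x,x) : x \in U\} = D_X \inter (U \times U)$, as desired.

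I do not anticipate any genuine obstacle here; the only point that deserves a moment's care is observing that the ``reflexive'' set $U$ coincides with $R^{-1}[X]$, so that its clopenness falls out of the defining clopen-preimage condition for continuous relations rather than requiring the hypothesis that each $R[x]$ be closed.
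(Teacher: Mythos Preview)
Your proof is correct and follows essentially the same approach as the paper's: both directions hinge on the identity $R^{-1}[V] = U \cap V$ for clopen $V$, and the key observation that $U := R^{-1}[X]$ is clopen by the definition of a continuous relation. The paper's version is simply terser, omitting the explicit dichotomy $R[x] \in \{\varnothing, \{x\}\}$ that you spell out.
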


\begin{proof}
First, suppose that $R\subseteq D_X$ is continuous. Then $U:=R^{-1}[X]$ is clopen, and $R=D_X \inter (U \times U)$. Conversely, suppose that $R=D_X \inter (U \times U)$ for some clopen $U \subseteq X$. Clearly $R[x]$ is closed for each $x\in X$. Moreover, if $V$ is clopen, then $R^{-1}[V]=U\cap V$, so $R^{-1}[V]$ is clopen. Thus, $R$ is continuous.
\end{proof}

Let $\alpha \NN$ be the one-point compactification of $\NN$.

\begin{definition}\label{def: DRS}
Define $D, R, S \subseteq \alpha \NN \times \alpha \NN$ as follows:
\begin{enumerate}
\item $D = D_{\alpha\NN} $;
\item $R = \{ (2n, 2n) \mid n \in \NN  \} \union \{(2n+1, 2n + 3) \mid n \in \NN\} \union \{(\infty, \infty)\}$;
\item $S = \{ (2n, 2n+2) \mid n \in \NN\} \union \{(2n+1, 2n+1) \mid n \in \NN\} \union \{(\infty, \infty)\}$.
\end{enumerate}
\end{definition}

\begin{lemma}\label{lem: DRS interior}
The relations $D$, $R$, and $S$ are functions that are both continuous and open. In particular, they are continuous relations.
\end{lemma}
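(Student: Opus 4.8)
The plan is to verify each of the three claimed properties — being a function, being continuous, being open — for each of $D$, $R$, and $S$, exploiting the fact that $\alpha\NN$ is a particularly simple Stone space: its clopen sets are exactly the finite subsets of $\NN$ together with the cofinite sets containing $\infty$, and a function $\alpha\NN \to \alpha\NN$ is continuous precisely when it is continuous at $\infty$, i.e. preimages of neighborhoods of the image of $\infty$ are neighborhoods of $\infty$. First I would dispense with $D = D_{\alpha\NN}$, which is the identity function and hence trivially a continuous open bijection; alternatively it falls under \cref{lem: CR under D} with $U = \alpha\NN$.

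Next I would treat $R$ and $S$ in parallel, since they have the same shape. In each case the relation is single-valued with domain all of $\alpha\NN$ (every $n\in\NN$, whether even or odd, appears exactly once as a first coordinate, and so does $\infty$), so each is a function; and each sends $\infty \mapsto \infty$. For continuity: the only point at which continuity is not automatic is $\infty$, and there one checks that for a basic clopen neighborhood $V$ of $\infty$ (cofinite, containing $\infty$), the preimage is again cofinite and contains $\infty$ — this holds because $R$ (resp. $S$) maps the cofinite set $\{m : m \ge N\}\cup\{\infty\}$ into a cofinite set. For openness it suffices, since these are functions on a Stone space, to check that the image of each basic clopen set is clopen: singletons $\{n\}$ map to singletons, and a cofinite clopen neighborhood of $\infty$ maps onto a cofinite clopen neighborhood of $\infty$ (one should note that $R$ is injective while $S$ is injective as well, so images of clopens are easy to describe; even without injectivity, the image of a finite set is finite and the image of a cofinite set containing $\infty$ is cofinite and contains $\infty$). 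Finally, a continuous open function between Stone spaces is in particular a continuous relation (indeed an interior relation), giving the "in particular" clause.

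I do not expect any genuine obstacle here; the content is entirely a routine check against the explicit description of the clopen algebra of $\alpha\NN$. The only point requiring a moment's care is openness at $\infty$: one must confirm that $R[\{m \ge N\}\cup\{\infty\}]$ and $S[\{m\ge N\}\cup\{\infty\}]$ are genuinely clopen, i.e. cofinite and containing $\infty$, rather than merely cofinite — which is immediate since $\infty$ is a fixed point of both. I would present the argument compactly, handling $D$ in one line and then $R$ and $S$ together, pointing out that all three are bijections of $\alpha\NN$ fixing $\infty$ and continuous at $\infty$, hence homeomorphisms, hence certainly interior relations.
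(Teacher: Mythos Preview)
Your detailed argument is essentially correct and close in spirit to the paper's: both check that $D$, $R$, $S$ are injective functions fixing $\infty$, deduce continuity from the behavior on basic clopens, and then argue openness. The paper's openness argument is slightly slicker than your direct clopen check: it observes that a continuous injection on a compact Hausdorff space is a homeomorphism onto its image, and that each of $D$, $R$, $S$ has \emph{open} range, hence is an open map.

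However, your proposed ``compact presentation'' at the end contains a factual error: $R$ and $S$ are \emph{not} bijections of $\alpha\NN$. From the definition, $R(2n)=2n$ and $R(2n+1)=2n+3$, so $1$ is not in the range of $R$; likewise $S(2n)=2n+2$ and $S(2n+1)=2n+1$, so $0$ is not in the range of $S$. Thus neither is a homeomorphism of $\alpha\NN$, and you cannot conclude openness that way. Your earlier direct check (images of finite sets are finite; images of cofinite sets containing $\infty$ are cofinite and contain $\infty$) is the right fallback, or you can use the paper's observation that the ranges $\alpha\NN\setminus\{1\}$ and $\alpha\NN\setminus\{0\}$ are open and each map is a homeomorphism onto its range.
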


\begin{proof}
All three of these relations are injective functions $\alpha \NN \to \alpha \NN$. We show that these functions are continuous. For each finite subset $U \subseteq \NN$, the sets $D^{-1}[U]$, $R^{-1}[U]$, and $S^{-1}[U]$ are also finite subsets of $\NN$ since $D$, $R$, and $S$ are injections that map $\infty$ to $\infty$. Because the finite subsets of $\NN$ and their complements in $\alpha \NN$ form a basis for $\alpha \NN$, we conclude that $D$, $R$, and $S$ are continuous functions and hence continuous relations. Furthermore, they are open functions because they are continuous injections on a compact Hausdorff space and they have open ranges.
\end{proof}

\begin{proposition}
We have the following:
\begin{enumerate}
\item $R \cap S$ is not continuous;
\item $R$ and $S$ have a meet in $\CR(\alpha \NN)$.
\end{enumerate}
\end{proposition}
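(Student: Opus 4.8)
The plan is to compute the set-theoretic intersection $R \inter S$ explicitly and then read off both claims from it.

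First I would determine $R \inter S$. A pair belongs to $R$ only if it has one of the forms $(2n,2n)$, $(2n+1,2n+3)$, or $(\infty,\infty)$, and belongs to $S$ only if it has one of the forms $(2m,2m+2)$, $(2m+1,2m+1)$, or $(\infty,\infty)$. Matching these up by the parity of the first coordinate, a pair $(2n,2n)$ in $R$ could only be the $S$-pair $(2n,2n+2)$, forcing $2n=2n+2$; a pair $(2n+1,2n+3)$ in $R$ could only be the $S$-pair $(2n+1,2n+1)$, forcing $2n+3=2n+1$; both are impossible. Hence $R \inter S = \{(\infty,\infty)\}$.

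For (1), I would show $\{(\infty,\infty)\}$ is not a continuous relation by testing the clopen set $U = \alpha\NN$: we get $(R\inter S)^{-1}[\alpha\NN] = \{\infty\}$, which is closed but not open in $\alpha\NN$ (every neighborhood of $\infty$ is cofinite), hence not clopen. So $R\inter S$ violates the continuity condition, even though $R$ and $S$ are themselves continuous by \cref{lem: DRS interior}.

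For (2), the point is that the meet in the poset $\CR(\alpha\NN)$ need not be the intersection. A lower bound of $\{R,S\}$ in $\CR(\alpha\NN)$ is exactly a continuous relation contained in $R \inter S = \{(\infty,\infty)\}$, and the only two subrelations of $\{(\infty,\infty)\}$ are $\varnothing$ and $\{(\infty,\infty)\}$ itself. The latter is not continuous by the argument in (1), while the empty relation is trivially continuous, so $\varnothing$ is the unique lower bound of $\{R,S\}$ and is therefore automatically their greatest lower bound; thus $R \meet S = \varnothing$ exists in $\CR(\alpha\NN)$. I do not anticipate a genuine obstacle: the only thing to be careful about is not conflating the poset meet with set-theoretic intersection, since the content of the proposition is precisely that they differ here; the computation of $R \inter S$ is routine bookkeeping and everything else follows at once.
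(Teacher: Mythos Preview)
Your proof is correct and follows essentially the same route as the paper: compute $R\cap S=\{(\infty,\infty)\}$, observe it is not a continuous relation because $\{\infty\}$ is not clopen, and conclude that $\varnothing$ is the only continuous relation below $R\cap S$ and hence the meet. The only cosmetic difference is that the paper packages the non-continuity check by invoking \cref{lem: CR under D} (since $\{(\infty,\infty)\}\subseteq D_{\alpha\NN}$ but is not of the form $D_{\alpha\NN}\cap(U\times U)$ for clopen $U$), whereas you verify the definition directly via $(R\cap S)^{-1}[\alpha\NN]=\{\infty\}$.
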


\begin{proof}
(1) We have $R\cap S=\{(\infty,\infty)\}$. Since $\{\infty\}$ is not clopen in $\alpha\NN$, we conclude from \cref{lem: CR under D} that $R\cap S$ is not continuous. 

(2) It follows from (1) and \cref{lem: CR under D} that the only continuous relation under $R\cap S$ is $\varnothing$. Thus, $\varnothing$ is the meet of $R$ and $S$ in $\CR(\alpha\NN)$.
\end{proof}

We recall (see, e.g., \cite[p.~99]{Gra1978}) that a join-semilattice $L$ is {\em distributive} if for each $a,b,c\in L$, the inequality $a\le b\vee c$ implies that there are $b'\le b$ and $c'\le c$ such that $a=b'\vee c'$. This notion of distributivity dualizes to meet-semilattices, and it reduces to the familiar notion in the presence of both meets and joins.

\begin{theorem}\label{thm: counterexamples}
We have the following:
\begin{enumerate}
\item $D$ and $R$ do not have a meet in $\CR(\alpha \NN)$;
\item $D$ and $S$ do not have a meet in $\CR(\alpha \NN)$;
\item there do not exist continuous relations $R' \subseteq R$ and $S' \subseteq S$ such that $R' \union S' = D$.
\end{enumerate}
Thus, the bounded join-semilattice $\CR(\alpha \NN)$ is neither distributive nor a lattice.
\end{theorem}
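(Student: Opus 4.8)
The plan is to lean entirely on \cref{lem: CR under D} together with the explicit description of $R$ and $S$ as functions: $R$ fixes each even number and $\infty$ and sends $2n+1$ to $2n+3$, while $S$ fixes each odd number and $\infty$ and sends $2n$ to $2n+2$. First I would record the intersections with the diagonal. Since $R$ fixes exactly the even numbers and $\infty$, we get $D \inter R = D_{\alpha\NN} \inter (E \times E)$ where $E = \{2n \mid n \in \NN\} \union \{\infty\}$; likewise $D \inter S = D_{\alpha\NN} \inter (O \times O)$ with $O = \{2n+1 \mid n \in \NN\} \union \{\infty\}$. I would also note once and for all that the clopen subsets of $\alpha\NN$ are exactly the finite subsets of $\NN$ together with their cofinite complements.

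For (1), observe that a common lower bound of $D$ and $R$ in $\CR(\alpha\NN)$ is a continuous relation contained in $D \inter R$, hence by \cref{lem: CR under D} of the form $D_{\alpha\NN} \inter (U \times U)$ with $U$ clopen, and containment in $D \inter R$ forces $U \subseteq E$. No cofinite subset of $\alpha\NN$ is contained in $E$, so $U$ must be a finite set of even numbers; conversely, every such $D_{\alpha\NN} \inter (U \times U)$ is indeed a common lower bound, since it lies below $D$, is continuous as $U$ is a finite subset of $\NN$, and lies below $R$ because $(m,m) \in R$ for $m$ even. Under inclusion these relations are order-isomorphic to the finite subsets of the even numbers, a poset with no greatest element: given one, adjoin an even number not yet present. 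Hence if $M$ were the meet of $\{D,R\}$ it would be of this form, but then the relation obtained by adjoining one more even number would be a strictly larger common lower bound, a contradiction. This proves (1), and replacing $E$ and the even numbers by $O$ and the odd numbers gives (2) verbatim.

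For (3), suppose $R' \subseteq R$ and $S' \subseteq S$ are continuous with $R' \union S' = D$. Then $R' \subseteq R \inter D = D_{\alpha\NN} \inter (E \times E)$ and $S' \subseteq S \inter D = D_{\alpha\NN} \inter (O \times O)$; in particular $S'$ contains no pair $(2n,2n)$, so $R'$ must contain every $(2n,2n)$. Thus $R' = D_{\alpha\NN} \inter (U \times U)$ with $U = \{x \mid (x,x) \in R'\}$ satisfying $\{2n \mid n \in \NN\} \subseteq U \subseteq E$, so $U$ is $\{2n \mid n \in \NN\}$ or $E$. Neither set is clopen in $\alpha\NN$ (each is infinite with an infinite complement), so \cref{lem: CR under D} shows $R'$ is not continuous --- a contradiction, which proves (3).

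Finally I would assemble the conclusion. Part (1) already shows $\CR(\alpha\NN)$ is not a lattice, since finite joins always exist but $D$ and $R$ lack a meet. For the failure of distributivity, note $D \subseteq R \union S$ (each $(2n,2n) \in R$, each $(2n+1,2n+1) \in S$, and $(\infty,\infty)$ lies in both), so $D \le R \join S$; but by (3) there are no $R' \le R$ and $S' \le S$ with $R' \join S' = R' \union S' = D$, which is exactly the distributivity law for join-semilattices failing. I expect no real obstacle beyond the bookkeeping of which diagonal pairs lie in $R$ and in $S$ and the elementary identification of the clopen subsets of $\alpha\NN$; once \cref{lem: CR under D} is in hand, the two structural points --- that the common lower bounds of $D$ and $R$ have no maximum, and that the only candidate for $R'$ is a non-clopen diagonal --- are immediate.
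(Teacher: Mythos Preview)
Your proof is correct and follows essentially the same approach as the paper: both arguments rest on \cref{lem: CR under D} to identify the continuous relations below $D\cap R$ (respectively $D\cap S$) as diagonals over finite sets of even (respectively odd) numbers, from which the absence of a greatest lower bound and the failure of distributivity follow directly. Your handling of (3) differs only cosmetically---you pin down the diagonal set $U$ of $R'$ and show it cannot be clopen, whereas the paper simply observes that $R'$ and $S'$ must both be finite and hence cannot union to $D$---but the underlying reasoning is identical.
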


\begin{proof}
(1) We have $D\cap R = \{ (2n, 2n) \mid n \in \NN  \} \union \{(\infty, \infty)\}$. Since the only clopen subsets of $\{ 2n \mid n \in \NN \} \union \{\infty\}$ are the finite subsets of $\{ 2n \mid n \in \NN \}$, it follows from \cref{lem: CR under D} that $D\cap R$ is not a continuous relation and that the only continuous relations under $D\cap R$ are finite subsets of $\{ (2n, 2n) \mid n \in \NN  \}$. Thus, there is no greatest continuous relation under $D\cap R$, which implies that the meet of $D$ and $R$ does not exist in $\CR(\alpha \NN)$.

(2) This is proved similarly to (1). 

(3) Let $R',S'\in\CR(X)$ be such that $R'\subseteq R$,  $S'\subseteq S$, and $R'\cup S'=D$. Then $R',S'\subseteq D$. Therefore, $R'\subseteq R\cap D$ and $S'\subseteq S\cap D$. As in the proof of (1), this forces $R'$ and $S'$ to be finite subsets of $D$, contradicting $R'\cup S'=D$.

That $\CR(\alpha \NN)$ is not a lattice follows from (1), and that it is not a distributive join-semilattice follows from (3).
\end{proof}

\begin{remark}
We see that in general $\CR(X)$ need not be a lattice. However, if $X$ is such that $\CR(X)$ is a lattice, then $\CR(X)$ must be distributive by the proof of \cref{thm: coframe}, which carries over in its entirety.
\end{remark}

\begin{corollary}
Let $\aA$ be the boolean algebra of finite and cofinite subsets of $\NN$. The bounded meet-semilattice $\NO(\aA)$ is neither distributive nor a lattice.
\end{corollary}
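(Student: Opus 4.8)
The plan is to reduce the statement to \cref{thm: counterexamples} via J\'onsson-Tarski duality. The crucial observation is that the Stone space of $\aA$ is homeomorphic to $\alpha\NN$, the one-point compactification of $\NN$ used throughout this section.

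First I would verify this identification. The ultrafilters of $\aA$ are precisely the principal ultrafilters $\{F \in \aA : n \in F\}$ for $n \in \NN$, together with the unique nonprincipal ultrafilter $\{F \in \aA : F \text{ cofinite}\}$, which I identify with $\infty$. One then checks that the Stone topology on this set of ultrafilters coincides with the topology of $\alpha\NN$: a finite $F \in \aA$ yields the clopen set $F \subseteq \NN \subseteq \alpha\NN$, a cofinite $F \in \aA$ yields $F \cup \{\infty\}$, and these exhaust the clopen subsets of $\alpha\NN$.

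With the Stone space of $\aA$ identified as $\alpha\NN$, \cref{lem: JT} gives a dual isomorphism between $\NO(\aA)$ and $\CR(\alpha\NN)$. By \cref{thm: counterexamples}, $\CR(\alpha\NN)$ is a bounded join-semilattice that is neither a lattice nor distributive. Being a lattice is a self-dual condition, and, by the definition of semilattice distributivity recalled just before \cref{thm: counterexamples}, a join-semilattice is distributive exactly when its opposite meet-semilattice is. Hence $\NO(\aA)$, being order-isomorphic to the opposite of $\CR(\alpha\NN)$, is a bounded meet-semilattice (as we already know from \cref{prop: semilattice}) that is neither a lattice nor distributive.

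I expect the only real content to be the identification of the Stone space of $\aA$ with $\alpha\NN$; everything else is a formal consequence of the duality together with \cref{thm: counterexamples}. Even that identification is entirely standard, so there is no serious obstacle here — the role of the corollary is just to transport the order-theoretic pathology of $\CR(\alpha\NN)$ into a statement about necessity operators on a concrete boolean algebra.
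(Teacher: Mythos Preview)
Your proposal is correct and follows exactly the paper's approach: invoke \cref{lem: JT} together with the well-known identification of the Stone space of $\aA$ with $\alpha\NN$, and then transfer \cref{thm: counterexamples} across the dual isomorphism. The only difference is that you spell out the identification and the self-duality of the relevant properties in more detail than the paper, which simply cites these as well known.
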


\begin{proof}
This follows immediately from \cref{thm: counterexamples,lem: JT} because the Stone space of $\aA$ is well-known to be homeomorphic to $\alpha \NN$.
\end{proof}

\section{The structure of $\NO(B)$ for $B$ complete}

In this section, we show that if $B$ is a complete boolean algebra, then the poset $\NO(B)$ has a much richer structure. Specifically, we prove that $\NO(B)$ is a complete lattice that satisfies the join-infinite distributive law and hence is a frame (see, e.g., \cite[p.~10]{PP2012} and below). While it is straightforward to see that $\NO(B)$ is a complete lattice, to prove that $\NO(B)$ is a frame, we again first show that $\CR(X)$ is a coframe (see, e.g., \cite{PP2012}*{p.~10} and below), where $X$ is the Stone dual of $B$, and then invoke \cref{lem: JT}.
In addition, we prove that $\NO(B)$ is a spatial frame iff $B$ is atomic, in which case $\NO(B)$ is a locally Stone frame. This result is proved by utilizing the well-known Vietoris construction in topology (see, e.g., \cite{Joh1982}*{Sec.~III.4}). 

We begin by recalling (see, e.g., \cite[pp.~90--92]{Hal1963}) that by Stone duality, complete boolean algebras correspond to extremally disconnected Stone spaces, where a space $X$ is {\em extremally disconnected} if the closure of each open set is open.

\begin{proposition} \label{prop: NO of CBA}
For each complete boolean algebra $B$, the poset $\NO(B)$ is a complete lattice.
\end{proposition}

\begin{proof}
Let $\mathcal S \subseteq \NO(B)$. Since $B$ is complete, we can define the meet $\bigwedge \mathcal S$ componentwise: $(\bigwedge \mathcal S)(a) = \bigwedge \{ f(a) \mid f\in\mathcal S\}$ for all $a \in B$. Thus, $\NO(B)$ is a complete lattice.
\end{proof}

By \cref{lem: JT}, it follows that $\CR(X)$ is a complete lattice for each extremally disconnected Stone space $X$. We next give an explicit description of joins in $\CR(X)$. For this we prove the following lemma, which we will use later as well.

\begin{lemma} \label{lem: auxiliary}
Let $X$ be a Stone space, $P \subseteq X \times X$, and $R=\cl P$. Then $R^{-1}[V]=\cl (P^{-1}[V])$ for each clopen $V \subseteq X$.    
\end{lemma}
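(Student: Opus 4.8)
The plan is to establish the two inclusions $R^{-1}[V] \subseteq \cl(P^{-1}[V])$ and $\cl(P^{-1}[V]) \subseteq R^{-1}[V]$ separately, using that $V$ is clopen (hence both open and closed) in a Stone space, where basic clopen rectangles form a basis for the product topology on $X \times X$.

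For the inclusion $\cl(P^{-1}[V]) \subseteq R^{-1}[V]$: since $P \subseteq R$ we have $P^{-1}[V] \subseteq R^{-1}[V]$, so it suffices to show $R^{-1}[V]$ is closed. Here I would use that $V$ is clopen and $R = \cl P$ is closed in $X \times X$, which is compact Hausdorff; then $R \inter (X \times V)$ is closed, hence compact, and its image under the (continuous, closed) projection $\pi_1 \: X \times X \to X$ is exactly $R^{-1}[V]$ and is closed. So this direction uses only that $V$ is closed.

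For the inclusion $R^{-1}[V] \subseteq \cl(P^{-1}[V])$: take $x \in R^{-1}[V]$, so there is $y \in V$ with $(x,y) \in R = \cl P$. To show $x \in \cl(P^{-1}[V])$, let $U$ be a clopen neighborhood of $x$; I must produce a point of $P^{-1}[V]$ in $U$. Since $V$ is open, $U \times V$ is an open neighborhood of $(x,y)$, and since $(x,y) \in \cl P$ there is $(x',y') \in P \inter (U \times V)$; then $x' \in P^{-1}[V] \inter U$. As the clopen sets form a neighborhood basis at $x$ in the Stone space $X$, this shows $x \in \cl(P^{-1}[V])$. This direction uses only that $V$ is open. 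Combining the two gives the claimed equality.

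I do not anticipate a serious obstacle: the only subtlety is remembering to invoke compactness of $X \times X$ (equivalently, closedness of the projection $\pi_1$) for the ``closed'' half, and the clopen basis of the Stone space $X$ for the ``dense'' half; the clopenness of $V$ is exactly what lets both halves go through simultaneously.
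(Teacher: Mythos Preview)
Your proof is correct and follows essentially the same approach as the paper: the paper proves the inclusion $R^{-1}[V]\subseteq\cl(P^{-1}[V])$ by the contrapositive version of your direct argument (if $x\notin\cl(P^{-1}[V])$, pick a clopen $U\ni x$ disjoint from $P^{-1}[V]$, so $P\cap(U\times V)=\varnothing$, hence $R\cap(U\times V)=\varnothing$), and for the other inclusion it simply writes ``$P^{-1}[V]\subseteq R^{-1}[V]$, and hence $\cl P^{-1}[V]\subseteq R^{-1}[V]$,'' leaving implicit the closedness of $R^{-1}[V]$ that you justify explicitly via the closed projection.
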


\begin{proof}
Clearly $P^{-1}[V] \subseteq R^{-1}[V]$, and hence $\cl P^{-1}[V] \subseteq R^{-1}[V]$. For the converse inclusion, suppose $x\notin \cl P^{-1}[V]$. Then there is a clopen set $U$ such that $x\in U$ and $U\cap P^{-1}[V] = \varnothing$. Thus, $P\cap (U\times V) = \varnothing$, so $\cl P \cap (U\times V) = \varnothing$, or equivalently, $R \cap (U\times V) = \varnothing$. Therefore, $U \cap R^{-1}[V] = \varnothing$, and so $x\notin R^{-1}[V]$.    
\end{proof}

\begin{lemma}\label{lem:joins}
Let $X$ be an extremally disconnect Stone space. For all $\mathcal{S} \subseteq \CR(X)$, we have that
$\bigvee \mathcal{S} = \cl(\bigcup \mathcal{S})$.
\end{lemma}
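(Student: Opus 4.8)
The plan is to show directly that $\cl(\bigcup\mathcal S)$ belongs to $\CR(X)$ and is the least upper bound of $\mathcal S$ in this poset. Write $P=\bigcup\mathcal S$ and $R=\cl P$. The only real ingredient is extremal disconnectedness, used in combination with \cref{lem: auxiliary}; everything else is routine Stone-space topology.

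First I would verify that $R$ is a continuous relation. Since $R$ is a closed subset of the compact Hausdorff space $X\times X$, each slice $R[x]$ is closed, being the image of the closed set $R\inter(\{x\}\times X)$ under the (closed) projection onto the second coordinate. For the clopen condition, fix a clopen $V\subseteq X$. By \cref{lem: auxiliary}, $R^{-1}[V]=\cl(P^{-1}[V])$, and $P^{-1}[V]=\bigcup\{(R')^{-1}[V]\mid R'\in\mathcal S\}$ is a union of clopen sets, hence open. Because $X$ is extremally disconnected, the closure of this open set is open; being a closure it is also closed, so $R^{-1}[V]$ is clopen. Thus $R\in\CR(X)$. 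This is the step that genuinely uses the hypothesis on $X$, and \cref{lem: auxiliary} is precisely what makes extremal disconnectedness applicable here, by reducing the clopenness of $R^{-1}[V]$ to that of the sets $(R')^{-1}[V]$.

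Next I would note that $R$ is an upper bound of $\mathcal S$, since $R'\subseteq P\subseteq R$ for every $R'\in\mathcal S$. To see that it is the \emph{least} upper bound, suppose $T\in\CR(X)$ satisfies $R'\subseteq T$ for all $R'\in\mathcal S$, so $P\subseteq T$. Here I would invoke the standard fact that every continuous relation is a closed subset of $X\times X$: if $(x,y)\notin T$, then $y\notin T[x]$, so since $T[x]$ is closed there is a clopen $V$ with $y\in V$ and $V\inter T[x]=\varnothing$; then $U:=X\setminus T^{-1}[V]$ is a clopen neighbourhood of $x$ with $(U\times V)\inter T=\varnothing$, witnessing that $T$ is closed. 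Hence $P\subseteq T$ gives $R=\cl P\subseteq T$. Combining the three steps yields $R=\bigvee\mathcal S$. There is no serious obstacle; the only point deserving a little care is the closedness of continuous relations, which could alternatively be recorded once as a preliminary observation, after which the identity $\bigvee\mathcal S=\cl(\bigcup\mathcal S)$ follows immediately.
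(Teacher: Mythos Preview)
Your proof is correct and follows essentially the same approach as the paper: set $R=\cl(\bigcup\mathcal S)$, use \cref{lem: auxiliary} together with extremal disconnectedness to see that $R^{-1}[V]$ is clopen for each clopen $V$, and conclude that $R\in\CR(X)$. The only difference is one of detail: the paper simply asserts ``it is enough to show that $R$ is continuous,'' implicitly relying on the fact that every continuous relation is closed in $X\times X$, whereas you spell out this fact and the least-upper-bound argument explicitly.
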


\begin{proof}
Let $R = \cl(\bigcup \mathcal{S})$. It is enough to show that $R$ is continuous. It is clear that $R$ is closed, so $R[x]$ is closed for each $x \in X$. Let $U \subseteq X$ be clopen. By \cref{lem: auxiliary},
\[
R^{-1}[U] = \cl\left(\bigcup \left\{ S^{-1}[U] \mid S \in \mathcal{S} \right\}\right).
\]
Since $X$ is extremally disconnected and $\bigcup \left\{ S^{-1}[U] \mid S \in \mathcal{S} \right\}$ is open, we conclude that $R^{-1}[U]$ is clopen. Therefore, $R$ is a continuous relation.
\end{proof}

Although meets exist in $\CR(X)$ even for infinite families of relations, the meet of two continuous relations need not be equal to their intersection. To see this, let $\beta\mathbb N$ be the Stone-\v Cech compactification of $\mathbb N$, and let $\NN^* = \beta\NN \setminus \NN$ be the remainder.

\begin{proposition} \label{prop: R intersection S}
There exist $R, S \in \CR(\beta \NN)$ such that $ R \wedge S \neq R \cap S$.
\end{proposition}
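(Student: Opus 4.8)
The plan is to exhibit two continuous relations on $\beta\NN$ whose set-theoretic intersection fails to be continuous, and then argue that the true meet must be strictly smaller. The natural source of examples is again a pair of injective functions chosen so that they agree only on a non-clopen closed set; here the relevant non-clopen closed set will sit inside the remainder $\NN^*$. Concretely, I would take two permutations $f,g$ of $\NN$ whose fixed-point sets (inside $\NN$) are both empty but which induce the same map on a large portion of $\NN^*$. A clean choice: let $f$ be the involution swapping $2n\leftrightarrow 2n+1$ and let $g$ be the shift-type bijection $2n\mapsto 2n+2$, $2n+1\mapsto 2n-1$ (with a small adjustment at $0$ and $1$ to keep it a bijection of $\NN$), arranged so that $f$ and $g$ differ on every natural number but such that their continuous extensions $\beta f,\beta g\colon\beta\NN\to\beta\NN$ restrict to the same homeomorphism on $\NN^*$ — this is the standard fact that a bijection of $\NN$ moving every point can still fix (or agree with another) on $\NN^*$ when the two bijections are "eventually equal up to a bounded error", e.g. when $f\triangle g$ relation is contained in a finite-to-one relation that is the identity on $\NN^*$. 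Set $R=\beta f$ and $S=\beta g$ as relations (graphs of continuous functions on a Stone space, hence in $\CR(\beta\NN)$ by the discussion after \cref{lem: JT}).

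The key steps, in order, are: (i) verify $R,S\in\CR(\beta\NN)$ — immediate, since a continuous function between Stone spaces is a continuous relation; (ii) compute $R\cap S$ and show it equals the diagonal restricted to some closed $F\subseteq\NN^*$ of the form $D_{\beta\NN}\inter(F\times F)$ with $F$ closed but not open; this is where the arithmetic of $f$ and $g$ is used, to show $f(n)\ne g(n)$ for all $n\in\NN$ (so $R\cap S$ misses the diagonal over $\NN$ entirely) while $\beta f|_{\NN^*}=\beta g|_{\NN^*}$ (so $R\cap S$ contains the full diagonal over $\NN^*$); (iii) apply \cref{lem: CR under D}: since $R\cap S\subseteq D_{\beta\NN}$, if it were continuous it would equal $D_{\beta\NN}\inter(U\times U)$ for a clopen $U$, forcing $U=\NN^*$; but $\NN^*$ is not clopen in $\beta\NN$, a contradiction, so $R\cap S\notin\CR(\beta\NN)$; (iv) conclude $R\wedge S\ne R\cap S$ — the meet $R\wedge S$ exists (since $\beta\NN$ is extremally disconnected, $\CR(\beta\NN)$ is a complete lattice by the proposition above) but as an element of $\CR(\beta\NN)$ it is contained in the non-continuous relation $R\cap S$, hence is properly contained in it.

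The main obstacle is step (ii): one must choose $f$ and $g$ so that they genuinely disagree at every point of $\NN$ yet their Stone–Čech extensions coincide on all of $\NN^*$. The cleanest way to guarantee $\beta f|_{\NN^*}=\beta g|_{\NN^*}$ is to arrange that $\{n : f(n)\ne g(n)\}$, while equal to all of $\NN$, is partitioned into finite blocks on which $f$ and $g$ are two different bijections of the same finite block — then for every ultrafilter $p\in\NN^*$ and every set $A\in p$, the images $f[A]$ and $g[A]$ differ from $A$ only within these blocks and generate the same ultrafilter, so $\beta f(p)=\beta g(p)$. Making this block structure precise and checking it yields a bijection of $\NN$ with empty fixed-point set is the only genuinely delicate computation; everything else is a direct appeal to \cref{lem: CR under D} and to completeness of $\CR(\beta\NN)$. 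An alternative, if one prefers to avoid explicit block arithmetic, is to take $f$ the successor-type map and $g=f^{-1}$-corrected, using that any two bijections of $\NN$ that differ by a permutation with finite orbits induce the same map on $\NN^*$; I would pick whichever formulation makes step (ii) shortest to write out.
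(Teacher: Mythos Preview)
Your plan has a fatal gap at step~(ii): the ``standard fact'' you invoke is false. If $f,g$ are bijections of $\NN$ with $f(n)\neq g(n)$ for every $n$, then their extensions $\beta f,\beta g$ disagree at \emph{every} point of $\beta\NN$, so the graphs satisfy $R\cap S=\varnothing$, which is certainly continuous. To see this, set $h=g^{-1}\circ f$; then $h$ is a fixed-point-free bijection of $\NN$, and by the classical three-set lemma one can partition $\NN=A_0\cup A_1\cup A_2$ with $h[A_i]\cap A_i=\varnothing$ for each $i$. Passing to closures in $\beta\NN$ shows $\beta h$ has no fixed points, hence $\beta f(p)\neq\beta g(p)$ for all $p\in\beta\NN$. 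More directly, $\beta f|_{\NN^*}=\beta g|_{\NN^*}$ holds iff $f^{-1}[A]$ and $g^{-1}[A]$ differ by a finite set for every $A\subseteq\NN$, and this forces $f=g$ on a cofinite set. Your block argument does not rescue this: with blocks $\{3n,3n+1,3n+2\}$ and $f,g$ the two $3$-cycles on each block, take $A=\{3n:n\in\NN\}$; then $f[A]$ and $g[A]$ are disjoint infinite sets, so any $p\in\NN^*$ containing $A$ has $\beta f(p)\neq\beta g(p)$. A second, smaller issue: even if the extensions did agree on $\NN^*$, the intersection $R\cap S$ would be the graph of $\beta f|_{\NN^*}$, not a piece of the diagonal, so \cref{lem: CR under D} would not apply as written.

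The paper avoids function graphs entirely. It takes $R=\{(n_1,n_2):n_1\le n_2\}\cup(\beta\NN\times\NN^*)$ and $S=R^\dagger$, verifies directly that both are continuous relations, and computes $R\cap S=\{(n,n):n\in\NN\}\cup(\NN^*\times\NN^*)$; this fails to be continuous because the inverse image of the clopen set $\cl(\{2n:n\in\NN\})$ is the even integers together with $\NN^*$, which is not clopen. The moral is that to get a large non-clopen intersection over the remainder, you need relations with large (non-singleton) fibres there; graphs of homeomorphisms are too rigid for this purpose.
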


\begin{proof}
Let $R = \{(n_1, n_2) \mid n_1 \leq n_2 \in \NN\} \cup (\beta \NN \times \NN^*)$. For each $n \in \NN$, we have that $R[n] = \{n, n+1, \ldots\} \cup \NN^*$ and that $R^{-1}[n] = \{0, \ldots, n\}$; both sets are closed. For each $x \in \NN^*$, we have that $R[x] = \NN^*$ and that $R^{-1}[x] = \beta \NN$; both sets are closed. Thus, under both $R$ and it converse $R^\dagger$, the image of a point is closed.

Each clopen subset of $\beta \NN$ is the closure of some subset $A \subseteq \NN$. For each nonempty $A \subseteq \NN$, we have that $$R[\cl(A)] =  \{\min A, \min A + 1, \ldots\} \cup \NN^* = \beta \NN \setminus \{0, \ldots, \min A -1 \},$$ and this set is clopen. For each nonempty finite $A \subseteq \NN$, we have that $R^{-1}[\cl(A)] = R^{-1}[A] = \{0, \ldots, \max A\}$, and for each infinite $A \subseteq \NN$, we have that $R^{-1}[\cl(A)] = \beta \NN$; both sets are clopen. Thus, under both $R$ and $R^\dagger$, the inverse image of a clopen set is clopen.

We have shown that $R$ and $R^\dagger$ are continuous relations. However, the relation $R \cap R^\dagger = \{(n, n) \mid n \in \NN\} \cup (\NN^* \times \NN^*)$ is not continuous. Indeed, if $A$ is the set of even natural numbers, then $\cl(A)$ is clopen, but $(R \cap R^\dagger)^{-1}[\cl(A)] = A \union \NN^*$ is not clopen because its complement is the set of odd natural numbers. Since $R \cap R^\dagger$ is not continuous, it must be distinct from $R  \wedge R^\dagger$.
\end{proof}

We now turn to showing that $\CR(X)$ is a coframe for every extremally disconnected Stone space $X$. Recall  
that a complete lattice $L$ is a \emph{frame} if it satisfies the join-infinite distributive law $ a \wedge \bigvee S = \bigvee\{ a \wedge b \mid b \in S\} $ for all $S \subseteq L$. Dually, $L$ is a {\em coframe} if it satisfies the meet-infinite distributive law $ a \vee \bigwedge S = \bigwedge\{ a \vee b \mid b \in S\} $ for all $S \subseteq L$.

\begin{theorem} \label{thm: coframe}
If $X$ is an extremally disconnected Stone space, then $\CR(X)$ is a coframe.
\end{theorem}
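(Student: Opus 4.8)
The plan is to verify the meet-infinite distributive law $R \vee \bigwedge \mathcal{S} = \bigwedge\{R \vee S \mid S \in \mathcal{S}\}$ for $R \in \CR(X)$ and $\mathcal{S} \subseteq \CR(X)$, working entirely with the explicit descriptions of joins and meets established above. The inequality $R \vee \bigwedge \mathcal{S} \le \bigwedge\{R \vee S \mid S \in \mathcal{S}\}$ is automatic from the lattice structure, so the content is the reverse inequality. The key tool is \cref{lem:joins}, which tells us that joins in $\CR(X)$ are computed as the closure of the union in $X \times X$; combined with \cref{lem: auxiliary}, this lets us reduce statements about inverse images of clopen sets under joins to statements about closures of ordinary unions, which behave well in an extremally disconnected space.

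First I would fix notation: write $Q = \bigwedge \mathcal{S}$ for the (componentwise, via \cref{lem: JT}) meet in $\CR(X)$, and let $T = \bigwedge\{R \vee S \mid S \in \mathcal{S}\}$. I want to show $T \subseteq R \vee Q = \cl(R \cup Q)$. The natural strategy is to produce, for each $S \in \mathcal{S}$, a handle on $R \vee S = \cl(R \cup S)$ via \cref{lem: auxiliary}, namely $(R \vee S)^{-1}[V] = \cl(R^{-1}[V] \cup S^{-1}[V])$ for clopen $V$, and then intersect over $S$. The crux is to understand the meet $\bigwedge \mathcal{S}$ well enough on the level of inverse images: since the meet is taken in $\CR(X)$ and need not be the intersection of the relations (as \cref{prop: R intersection S} warns), one must argue that $Q$ is nevertheless the largest continuous relation contained in $\bigcap \mathcal{S}$, and on extremally disconnected $X$ one expects $Q^{-1}[V]$ to be the interior of $\bigcap_{S} S^{-1}[V]$, or at least closely related to it.

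The heart of the argument, and the main obstacle, is the following kind of computation in an extremally disconnected space: given open sets $A$ and $B_S$ (for $S$ ranging over an index set), one must relate $\cl(A \cup \bigcap_S B_S)$ to $\bigcap_S \cl(A \cup B_S)$. In a general space these differ, but extremal disconnectedness forces $\cl$ of an open set to be clopen, and one can exploit that clopen sets form a complete boolean algebra isomorphic to $\CR$-relevant data. Concretely, I expect to show that for a point $x$ with $T[x] \subseteq X$, and any clopen $V$, if $x \in T^{-1}[V]$ then $x \in \cl(R^{-1}[V] \cup Q^{-1}[V])$, by taking a clopen neighborhood $U$ of $x$ and chasing the condition $U \cap (R^{-1}[V] \cup S^{-1}[V])$ being dense (hence, after closure, all of $U$) for every $S$, down to a statement about $Q$. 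This is where extremal disconnectedness is essential and where the care lies; once the inverse-image identity $T^{-1}[V] = (R \vee Q)^{-1}[V]$ is established for all clopen $V$, continuity of both sides and the fact that a continuous relation is determined by its inverse images of clopen sets (equivalently, by \cref{lem: JT} and Stone duality) finish the proof.

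Finally, I would record the conclusion: the meet-infinite distributive law holds, and since $\CR(X)$ is already known to be a complete lattice when $X$ is extremally disconnected, it is a coframe. As a sanity check I would note that this argument uses extremal disconnectedness in exactly one place — guaranteeing that the closure of the relevant open set is clopen so that it names a genuine element of $\CR(X)$ — which matches the failure exhibited in \cref{thm: counterexamples} for $\alpha\NN$, which is not extremally disconnected.
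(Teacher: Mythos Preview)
Your approach has a genuine gap at exactly the point you flag as ``the crux'': you need a workable description of $Q^{-1}[V]$ for the meet $Q=\bigwedge\mathcal S$, and you do not have one. You correctly note (citing \cref{prop: R intersection S}) that meets in $\CR(X)$ are not intersections, so there is no formula analogous to \cref{lem:joins} available for meets; your speculation that $Q^{-1}[V]$ should be the interior of $\bigcap_{S}S^{-1}[V]$ is neither proved nor obviously true, and nothing in the paper supplies it. Consequently the step ``chasing the condition \ldots\ down to a statement about $Q$'' has no mechanism behind it: the only thing you know about $Q$ is its universal property, and your inverse-image calculus never invokes that property.

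The paper's proof sidesteps this entirely by a trick you are missing. Rather than computing with inverse images, it takes a point $(x,y)$ in the putative difference, separates it from $R\cup\bigwedge_\alpha S_\alpha$ by a clopen box $U\times V$, and then observes that intersecting a continuous relation with $U\times V$ can be realized as \emph{composition} with the diagonal relations $D_X\cap(U\times U)$ and $D_X\cap(V\times V)$, which are continuous by \cref{lem: CR under D}. Hence $T:=\big(\bigwedge_\alpha(R\cup S_\alpha)\big)\cap(U\times V)$ is itself a continuous relation. Since $T$ is disjoint from $R$ but contained in each $R\cup S_\alpha$, it lies in each $S_\alpha$, and therefore $T\le\bigwedge_\alpha S_\alpha$ by the universal property of the meet---no formula for the meet is ever needed. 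This yields the contradiction $(x,y)\in T\subseteq\bigwedge_\alpha S_\alpha$. The proof does not use \cref{lem: auxiliary} or \cref{lem:joins}; extremal disconnectedness enters only to guarantee that the meets exist in the first place.
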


\begin{proof}
Recalling that finite joins in $\CR(X)$ are unions, we must show that $R \union \Meet_\alpha S_\alpha = \Meet_\alpha (R \union S_\alpha)$ for all $R \in \CR(X)$ and $\{S_\alpha\}_{\alpha \in I} \subseteq \CR(X)$.
Trivially, $R \union \Meet_\alpha S_\alpha \subseteq \Meet_\alpha (R \union S_\alpha)$. Assume that $R \union \Meet_\alpha S_\alpha \subsetneq \Meet_\alpha (R \union S_\alpha)$, and let $(x, y) \in \big(\Meet_\alpha (R \union S_\alpha)\big) \setminus \big(R \union \Meet_\alpha S_\alpha\big)$. The continuous relation $R \union \Meet_\alpha S_\alpha$ is a closed subset of $X \times X$, and thus the point $(x, y)$ has a basic neighborhood $U \times V$ that is disjoint from $R \union \Meet_\alpha S_\alpha$, where $U$ and $V$ are clopen subsets of $X$. Thus, the sets $D_X \inter (U \times U)$ and $D_X \inter (V \times V)$ are continuous relations on $X$ by \cref{lem: CR under D}. Therefore, the composition $T = \big(D_X \inter (V \times V)\big) \circ \big(\Meet_\alpha (R \union S_\alpha)\big) \circ \big(D_X \inter (U \times U)\big)$ is a continuous relation on $X$.

We show that that $T = \big(\Meet_\alpha (R \union S_\alpha)\big) \cap (U \times V)$. For all $(x', y') \in X \times X$,
\begin{align*}&
(x', y') \in \big(D_X \inter (V \times V)\big) \circ \left(\Meet_\alpha (R \union S_\alpha)\right) \circ \big(D_X \inter (U \times U)\big)
\\ & \EV (x', x'') \in D_X \inter (U \times U) \; \text{and} \; (x'', y'') \in \Meet_\alpha (R \union S_\alpha) \; \text{and} \; (y'', y') \in D_X \inter (V \times V)
\\ & \hspace {70ex} \text{for some}\; x'', y'' \in X
\\ & \EV x'\in U \; \text{and} \; (x', y') \in \Meet_\alpha (R \union S_\alpha) \; \text{and} \;  y' \in V
\\ &\EV
(x', y') \in \left(\Meet_\alpha (R \union S_\alpha)\right) \cap (U \times V).
\end{align*}
Thus, $T$ is disjoint from $R \union \Meet_\alpha S_\alpha$, and in particular, $T$ is disjoint from $R$. However, $T \subseteq \Meet_\alpha (R \union S_\alpha) \subseteq \Inter_\alpha (R \union S_\alpha)$; hence $T \subseteq S_\alpha$ for each $\alpha \in I$. Therefore, $T \subseteq \Meet_\alpha S_\alpha$.

We obtain a contradiction: $(x,y) \in \Meet_\alpha (R \union S_\alpha)$ and $(x,y) \in U \times V$, so 
\[
(x, y) \in \left(\Meet_\alpha (R \union S_\alpha)\right) \cap (U \times V) = T \subseteq \Meet_\alpha S_\alpha \subseteq R \union \Meet_\alpha S_\alpha.
\]
Therefore, $R \union \Meet_\alpha S_\alpha = \Meet_\alpha (R \union S_\alpha)$. We conclude that $\CR(X)$ is a coframe, as claimed.
\end{proof}

\begin{definition}
    Let $X$ be a Stone space. We call a relation $R$ on $X$ {\em cocontinuous} provided $R$ is the set-theoretic complement of a continuous relation on $X$. Let $\CCR(X)$ be the poset of cocontinuous relations on $X$, which are ordered by inclusion.
\end{definition}

As an immediate consequence of \cref{thm: coframe}, we obtain that $\CCR(X)$ is a frame. Moreover, for $R,S\in\CCR(X)$, we have that $R \wedge S= R\cap S$, and for $\{R_\alpha\}\subseteq\CCR(X)$, we have that $\bigvee_\alpha R_\alpha$ is the least cocontinuous relation containing $\bigcup_\alpha R_\alpha$. In particular, if $\bigcup_\alpha R_\alpha \in \CCR(X)$, then $\bigvee_\alpha R_\alpha = \bigcup_\alpha R_\alpha$. 

We recall (see, e.g., \cite{Joh1982}*{p.~101}) that a frame $L$ is {\em zero-dimensional} if the complemented elements of $L$ join-generate $L$.

\begin{proposition} \label{prop: zero-dim}
$\CCR(X)$ is a zero-dimensional frame.
\end{proposition}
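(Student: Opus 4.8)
The plan is to show that the clopen rectangles of $X\times X$ are complemented elements of the frame $\CCR(X)$ and that every cocontinuous relation is a join of such rectangles. Two elementary facts will do the work. First, a continuous relation is a closed subset of $X\times X$ (as is already used in the proof of \cref{thm: coframe}), so every cocontinuous relation is \emph{open} in $X\times X$. Second, for clopen $U,V\subseteq X$ the rectangle $U\times V$ is a continuous relation, since $(U\times V)[x]\in\{V,\varnothing\}$ and $(U\times V)^{-1}[W]\in\{U,\varnothing\}$; hence $(X\times X)\setminus(U\times V)$ is cocontinuous. Moreover $(X\times X)\setminus(U\times V)=\big((X\setminus U)\times X\big)\cup\big(X\times(X\setminus V)\big)$ is a union of two continuous relations and therefore continuous (finite joins in $\CR(X)$ are unions), so $U\times V$ is cocontinuous as well. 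Thus both $U\times V$ and its set-theoretic complement lie in $\CCR(X)$.

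Granting this, I would finish as follows. The meet in $\CCR(X)$ of $U\times V$ and $(X\times X)\setminus(U\times V)$ is their intersection $\varnothing$, the bottom element of $\CCR(X)$; their join is the least cocontinuous relation containing their union $X\times X$, which is $X\times X$ itself, the top element. Hence each clopen rectangle $U\times V$ is a complemented element of $\CCR(X)$. Now take an arbitrary $R\in\CCR(X)$. By the first fact $R$ is open in the Stone space $X\times X$, and the clopen rectangles form a basis, so $R=\bigcup\{U\times V\mid U,V\ \text{clopen},\ U\times V\subseteq R\}$ as a set. This union equals $R$, which is cocontinuous, so by the description of joins in $\CCR(X)$ recorded after \cref{thm: coframe} it is also the join of the family $\{U\times V\mid U\times V\subseteq R\}$ computed in $\CCR(X)$. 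Since every member of this family is complemented, the complemented elements of $\CCR(X)$ join-generate it, i.e.\ $\CCR(X)$ is zero-dimensional.

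I do not expect a genuine obstacle: the argument is just an assembly of \cref{thm: coframe}, the fact that continuous relations are closed, and the basis of clopen rectangles in $X\times X$. The one place to be careful is verifying that the clopen rectangles \emph{and} their complements are all cocontinuous, which is what makes each $U\times V$ an actual complemented element of $\CCR(X)$ rather than merely an open subset of $X\times X$.
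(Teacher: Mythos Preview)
Your proof is correct and follows essentially the same approach as the paper: show that clopen rectangles are complemented in $\CCR(X)$, then use that they form a basis for $X\times X$ and that cocontinuous relations are open to write every element of $\CCR(X)$ as a join of such rectangles. The paper's version is slightly terser, observing in one stroke that \emph{every} clopen subset of $X\times X$ is a continuous relation (hence complemented in $\CCR(X)$), but your explicit verification for rectangles and their complements accomplishes the same thing.
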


\begin{proof}
    Since each clopen of $X\times X$ is a continuous relation on $X$, the clopens of $X\times X$ are complemented elements of $\CCR(X)$. Because $\{ U\times V \mid U,V$ clopen in $X\}$ is a basis for the topology on $X\times X$, each open subset $W$ of $X\times X$ is a union of the sets $U\times V$ contained in $W$. If $W\in\CCR(X)$, then this union is the join in $\CCR(X)$. Thus, $\CCR(X)$ is zero-dimensional. 
\end{proof}

We recall that a frame $L$ is {\em spatial} if $L$ is isomorphic to the frame of open sets of a topological space. It is well known (see, e.g., \cite[p.~18]{PP2012}) that $L$ is spatial iff each element of $L$ is a meet of meet-prime elements of $L$, where $m\in L\setminus\{1\}$ is {\em meet-prime} if $a\wedge b\le m$ implies $a\le m$ or $b\le m$. We next show that in general $\CCR(X)$ need not be spatial. For this we use that $R\in\CCR(X)$ is meet-prime iff its complement $R^c$ is join-prime in $\CR(X)$, where join-prime elements are defined dually.

\begin{lemma} \label{lem: join-prime}
Let $X$ be an extremally disconnected Stone space. Then $R\in\CR(X)$ is join-prime iff $R=\{(x,y)\}$, where $x$ is an isolated point of $X$ and $y$ is any point of $X$.
\end{lemma}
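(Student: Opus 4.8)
The plan is to use the fact, recorded after \cref{prop: semilattice} and sharpened in \cref{lem:joins}, that finite joins in $\CR(X)$ are unions, i.e.\ $S \join T = S \union T$ for $S,T \in \CR(X)$. Recall that join-prime means nonzero and that $R \subseteq S \join T$ forces $R \subseteq S$ or $R \subseteq T$.

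For the implication $(\Leftarrow)$, suppose $x$ is isolated and $y \in X$. I would first check that $R := \{(x,y)\} \in \CR(X)$: each $R[z]$ is $\{y\}$ or $\varnothing$, hence closed; and for clopen $V \subseteq X$, the set $R^{-1}[V]$ is $\{x\}$ (if $y \in V$) or $\varnothing$, and $\{x\}$ is clopen exactly because $x$ is isolated. Then $R \neq \varnothing$, and if $R \subseteq S \join T = S \union T$ then $(x,y)$ lies in $S$ or in $T$, so $R \subseteq S$ or $R \subseteq T$. Hence $R$ is join-prime.

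For $(\Rightarrow)$, let $R \in \CR(X)$ be join-prime, so $R \neq \varnothing$. The key step is that for any clopen $U,V \subseteq X$ the box-restriction $R \inter (U \times V)$ lies in $\CR(X)$: its image at $z$ is $R[z] \inter V$ (if $z \in U$) or $\varnothing$, hence closed, and $\big(R \inter (U\times V)\big)^{-1}[C] = U \inter R^{-1}[C \inter V]$ is clopen. Since $(U\times V)^c = (U^c \times X) \union (X \times V^c)$, the relation $R \inter (U\times V)^c$ is a union of two such box-restrictions, hence also in $\CR(X)$; and the union of $R \inter (U\times V)$ with $R \inter (U\times V)^c$ is $R$. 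So $R$ is the join in $\CR(X)$ of these two relations, and join-primeness gives, for every clopen box $U\times V$, that $R \subseteq U \times V$ or $R \inter (U\times V) = \varnothing$.

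From this dichotomy I would derive that $R$ is a singleton: if $(x_1,y_1) \neq (x_2,y_2)$ both lie in $R$, pick a clopen $U \ni x_1$ with $x_2 \notin U$ when $x_1 \neq x_2$, or a clopen $V \ni y_1$ with $y_2 \notin V$ when $x_1 = x_2$ and $y_1 \neq y_2$; then $U \times X$ (resp.\ $X \times V$) meets $R$ without containing it, contradicting the dichotomy. Thus $R = \{(x,y)\}$, and since $R \in \CR(X)$ the set $\{x\} = R^{-1}[X]$ is clopen, so $x$ is isolated. The only real computation is the key step verifying that box-restrictions of a continuous relation are continuous; the remainder is short clopen-separation bookkeeping. (The argument does not actually invoke extremal disconnectedness, but that is the case in which we need the statement.)
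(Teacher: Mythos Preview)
Your proof is correct and follows essentially the same strategy as the paper: show that a join-prime $R$ cannot contain two distinct points by separating them with a clopen box and invoking join-primeness, then read off that $x$ is isolated from $R^{-1}[X]=\{x\}$ being clopen.

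The only noteworthy difference is in how the decomposition is produced. You intersect $R$ with the clopen box and its complement, which forces you to verify that $R\cap(U\times V)$ (and hence $R\cap(U\times V)^c$) is again a continuous relation; this is the ``key step'' you flag. The paper avoids this computation entirely: it simply observes that any clopen $S\subseteq X\times X$ and its complement $S^c$ are themselves continuous relations, and then uses $R\subseteq S\cup S^c$ directly. Join-primeness then gives $R\subseteq S$ or $R\subseteq S^c$, which is exactly your dichotomy without the need to check that box-restrictions of continuous relations are continuous. Both routes work; the paper's is a line shorter.
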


\begin{proof}
    Let $x$ be isolated. Then it is clear that $\{(x,y)\}\in\CR(X)$ for each $y\in X$, and it is obvious that $\{(x,y)\}$ is join-prime in $\CR(X)$.
    Conversely, let $R$ be join-prime in $\CR(X)$. 
    We first show that $R$ must be a singleton. Suppose $R$ contains at least two distinct points $(x,y)$ and $(x',y')$. Since $(x,y)\ne (x',y')$ and $X\times X$ is a Stone space, there is a clopen relation $S$ such that $(x,y)\in S$ and $(x',y')\notin S$. Then $S,S^c\in\CR(X)$ and $R\subseteq S\cup S^c$, but $R\not\subseteq S,S^c$. Therefore, $R$ is not join-prime. The obtained contradiction proves that $R$ is a singleton, so let $R=\{(x,y)\}$. Because $R$ is continuous, $R^{-1}[X]$ is clopen. But $R^{-1}[X]=\{x\}$. Thus, $x$ must be an isolated point of $X$.
\end{proof}

\begin{theorem}\label{thm: CCR is spatial}
Let $X$ be an extremally disconnected Stone space. The following are equivalent:
\begin{enumerate}
    \item $\CCR(X)$ is a spatial frame;
    \item the isolated points of $X$ are dense in $X$;
    \item $X$ is homeomorphic to the Stone-\v Cech compactification $\beta M$ for some set $M$.
\end{enumerate}
\end{theorem}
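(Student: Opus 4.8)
The plan is to reduce $(1)\Leftrightarrow(2)$ to a clean density statement by transporting spatiality through the complementation anti-isomorphism $\CCR(X)\iso\CR(X)^{\mathrm{op}}$, and to get $(2)\Leftrightarrow(3)$ from Stone duality. For the first equivalence, recall that $\CCR(X)$ is a frame (by \cref{thm: coframe}), so by the criterion recalled above it is spatial iff each of its elements is a meet of meet-prime elements. Under complementation, meet-prime elements of $\CCR(X)$ correspond (by the observation preceding \cref{lem: join-prime}) to join-prime elements of $\CR(X)$, so $\CCR(X)$ is spatial iff every continuous relation $T$ on $X$ equals the join in $\CR(X)$ of the join-prime relations contained in it. By \cref{lem: join-prime} those join-primes are exactly the singletons $\{(x,y)\}\subseteq T$ with $x$ an isolated point of $X$; letting $I$ denote the set of isolated points, they are the one-point subsets of $(I\times X)\inter T$, so by \cref{lem:joins} their join is $\cl\big((I\times X)\inter T\big)$. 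Hence the whole problem becomes: \emph{$\CCR(X)$ is spatial iff $(I\times X)\inter T$ is dense in $T$ for every $T\in\CR(X)$.}

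Granting this, I would prove $(2)\Rightarrow(1)$ as follows. Assume $\cl(I)=X$, fix $T\in\CR(X)$, a point $(x,y)\in T$, and a basic clopen neighborhood $U\times V$ of $(x,y)$. Since $y\in T[x]\inter V$ we have $x\in U\inter T^{-1}[V]$, and $U\inter T^{-1}[V]$ is clopen because $T$ is continuous; by density of $I$ it contains an isolated point $x'$, and choosing $y'\in T[x']\inter V$ yields a point of $(I\times X)\inter T$ in $U\times V$. Thus $(I\times X)\inter T$ is dense in $T$. For $\neg(2)\Rightarrow\neg(1)$, note that $I$ is open, so $\cl(I)$ is clopen because $X$ is extremally disconnected, whence $W:=X\setminus\cl(I)$ is a nonempty clopen set containing no isolated point of $X$. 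By \cref{lem: CR under D}, $T:=D_X\inter(W\times W)$ is a continuous relation, and it is nonempty, yet $(I\times X)\inter T=\varnothing$ (any $(x,x)\in T$ has $x\in W$, hence $x\notin I$), so it is not dense in $T$. Equivalently, $T^c$ is a proper cocontinuous relation above which there is no meet-prime element of $\CCR(X)$, so $T^c$ is not a meet of meet-primes.

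For $(2)\Leftrightarrow(3)$: if $X\iso\beta M$, then the canonical copy of $M$ in $\beta M$ is a dense subset all of whose points are isolated, so $(2)$ holds. Conversely, assume $(2)$ and set $M:=I$ with its (necessarily discrete) subspace topology. The restriction map $\mathrm{Clop}(X)\to\mathcal P(M)$, $U\mapsto U\inter M$, is a boolean homomorphism; it is injective because $M$ is dense, and surjective because for every $A\subseteq M$ the set $A$ is open in $X$ (a union of isolated singletons), hence $\cl A$ is clopen with $\cl A\inter M=A$. Therefore $\mathrm{Clop}(X)\iso\mathcal P(M)$, and by Stone duality $X$ is homeomorphic to the Stone space of $\mathcal P(M)$, which is $\beta M$.

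The step I expect to be the main obstacle is the bookkeeping in the first paragraph: correctly passing ``spatial'' across the anti-isomorphism and combining \cref{lem: join-prime} with \cref{lem:joins} to arrive at the density reformulation. Once that reformulation is in hand, all three remaining implications are short and more or less forced.
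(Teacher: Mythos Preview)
Your proof is correct and follows essentially the same route as the paper's: both hinge on \cref{lem: join-prime} to identify the join-primes of $\CR(X)$, and your density criterion ``$(I\times X)\cap T$ is dense in $T$'' is just the paper's ``$\CR(X)$ is join-generated by join-primes'' unpacked via \cref{lem:joins}. Two minor divergences worth noting: for $(1)\Rightarrow(2)$ the paper works directly with $U\times U$ for an arbitrary nonempty clopen $U$ and so never invokes extremal disconnectedness in that direction (your appeal to $\cl(I)$ being clopen is harmless but unnecessary, since in any Stone space a non-dense set already misses some nonempty clopen); and for $(2)\Leftrightarrow(3)$ the paper simply cites \cite[p.~96]{GJ1960}, whereas you supply a self-contained argument via the isomorphism $\mathrm{Clop}(X)\cong\mathcal P(M)$ and Stone duality, which is a pleasant addition.
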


\begin{proof}
    (3)$\Rightarrow$(2) is obvious and (2)$\Rightarrow$(3) is well known (see, e.g., \cite[p.~96]{GJ1960}). To see (2)$\Rightarrow$(1), we must show that 
    $\CCR(X)$ is meet-generated by meet-primes, which is equivalent to $\CR(X)$ being join-generated by join-primes. Let $R, S\in\CR(X)$ be such that $R \not\subseteq S$. Then there are clopen $U,V \subseteq X$ such that $(U \times V) \cap R \neq \varnothing$ and $(U \times V) \cap S = \varnothing$. Thus, $U \cap R^{-1}[V] \ne \varnothing$ and $U \cap S^{-1}[V] = \varnothing$. 
    Since $U \cap R^{-1}[V]$ is a nonempty clopen, it follows from (2) that there is an isolated point $x \in U \cap R^{-1}[V]$. But then there is $y\in V$ such that $(x,y) \in R$. However, $(x,y) \notin S$. Therefore, for all $R, S \in \CR(X)$, if $R \not\subseteq S$ then there exists a singleton $\{(x,y)\} \subseteq R \setminus S$ such that $x$ is isolated. Since $\{(x,y)\}$ is join-prime in $\CR(X)$ by \cref{lem: join-prime}, we conclude that $\CR(X)$ is join-generated by join-primes, and hence $\CCR(X)$ is spatial.
   
    To see (1)$\Rightarrow$(2), let $U$ be a nonempty clopen subset of $X$. Then $U\times U$ is a nonempty clopen relation on $X$, so $U\times U\in\CR(X)$. Since $\CCR(X)$ is spatial, $\CR(X)$ is join-generated by join-irreducibles. Therefore, by \cref{lem: join-prime}, there is an isolated point $x$ of $X$ such that $(x,y)\in U\times U$ for some $y\in X$. Thus, $x\in U$, and hence the isolated points are dense in~$X$. 
\end{proof}

In particular, if $X$ has no isolated points, (e.g., if $X$ is the Gleason cover of $[0,1]$), then $\CCR(X)$ is not spatial.

\begin{corollary}
    Let $B$ be a complete boolean algebra. Then $\NO(B)$ is a zero-dimensional frame, which is spatial iff $B$ is atomic.
\end{corollary}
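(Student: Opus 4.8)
The plan is to transport the statement through Stone and J\'onsson--Tarski duality to the results already established for $\CCR(X)$. Let $X$ be the Stone space of $B$. Since $B$ is complete, $X$ is extremally disconnected. By \cref{lem: JT}, the assignment $\Box \mapsto R_\Box$ is a dual isomorphism of posets $\NO(B) \to \CR(X)$. Composing it with set-theoretic complementation $R \mapsto R^c$, which is a dual isomorphism $\CR(X) \to \CCR(X)$ by the very definition of $\CCR(X)$, yields an order isomorphism $\NO(B) \iso \CCR(X)$.

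First I would conclude that $\NO(B)$ is a zero-dimensional frame. By \cref{prop: zero-dim}, $\CCR(X)$ is a zero-dimensional frame, and since being a zero-dimensional frame is an order-theoretic property, it is carried along the isomorphism $\NO(B) \iso \CCR(X)$. (Alternatively, one can argue directly: $\NO(B)$ is a complete lattice with pointwise meets, it satisfies the join-infinite distributive law because $\CR(X)$ is a coframe by \cref{thm: coframe}, and its complemented elements join-generate it because the clopen relations do so in $\CCR(X)$.)

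For spatiality, the same isomorphism reduces the claim to showing that $\CCR(X)$ is spatial iff $B$ is atomic. By \cref{thm: CCR is spatial}, $\CCR(X)$ is spatial iff the isolated points of $X$ are dense in $X$. It then remains to recall the classical fact from Stone duality that the isolated points of $X$ are exactly the principal ultrafilters $\{\, b \in B \mid b \ge a \,\}$ for $a$ an atom of $B$; hence the isolated points are dense in $X$ iff every nonzero element of $B$ lies above an atom, that is, iff $B$ is atomic. Combining these equivalences gives that $\NO(B)$ is spatial iff $B$ is atomic.

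The only step with content beyond assembling earlier results is the identification of atomicity of $B$ with density of the isolated points of its Stone space, which is a standard piece of Stone duality; I do not anticipate a genuine obstacle here.
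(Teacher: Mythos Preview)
Your proposal is correct and follows essentially the same approach as the paper: establish the order isomorphism $\NO(B)\cong\CCR(X)$ via \cref{lem: JT} and complementation, then invoke \cref{prop: zero-dim} for zero-dimensionality and \cref{thm: CCR is spatial} for spatiality. The only difference is that you spell out the equivalence between atomicity of $B$ and density of isolated points in $X$, which the paper leaves implicit in its appeal to \cref{thm: CCR is spatial}.
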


\begin{proof}
    Let $X$ be the Stone space of $B$. Then $\NO(B)$ is dual to $\CR(X)$, which is dual to $\CCR(X)$, and hence $\NO(B)$ is isomorphic to $\CCR(X)$. Thus, $\NO(B)$ is zero-dimensional by \cref{prop: zero-dim}, and it is spatial iff $B$ is atomic by \cref{thm: CCR is spatial}.
\end{proof}

We next show that if $X = \beta M$ for some set $M$, then $\CCR(X)$ is not only a zero-dimensional spatial frame but is in fact the frame of opens of a locally Stone space. 
A space is {\em locally Stone} if it is locally compact, Hausdorff, and zero-dimensional in the sense that it has a basis of clopen sets. In other words, locally Stone spaces are obtained by changing ``compact" to ``locally compact" in the definition of a Stone space. 

We recall that a frame $L$ is {\em algebraic} if each element of $L$ is a join of compact elements. We call $L$ a {\em locally Stone frame} if $L$ is zero-dimensional and algebraic. The next theorem belongs to folklore. We give a short proof for the reader's convenience.

\begin{theorem} \label{thm: locally Stone}
A frame is locally Stone iff it is isomorphic to the frame of opens of a locally Stone space.    
\end{theorem}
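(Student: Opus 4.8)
The plan is to prove the two implications separately; the direction from spaces to frames is routine, while the converse is obtained by representing a locally Stone frame through its compact elements.

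Assume first that $X$ is a locally Stone space. I would argue that the compact open subsets of $X$ form a basis of clopen sets: if $x$ lies in an open set $U$, then local compactness and the Hausdorff property give an open $V$ with $x\in V\subseteq\overline V\subseteq U$ and $\overline V$ compact, and zero-dimensionality gives a clopen $W$ with $x\in W\subseteq V$; since $W$ is a closed subset of the compact set $\overline V$ it is compact, and it is clopen. The compact open subsets of $X$ are exactly the compact elements of $\mathcal O(X)$, and they cover every open set, so $\mathcal O(X)$ is algebraic; since they are moreover complemented in $\mathcal O(X)$, it is zero-dimensional. Hence $\mathcal O(X)$ is a locally Stone frame.

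For the converse, let $L$ be a zero-dimensional algebraic frame and let $K$ be its set of compact elements. The first key observation is that every $k\in K$ is complemented: by zero-dimensionality $k$ is the join of the complemented elements below it, so compactness makes it a \emph{finite} such join, and a finite join of complemented elements is complemented. The second key step is that $K$ is closed under binary meets, hence is a distributive sublattice of $L$ with least element $0$: given $k_1,k_2\in K$ with complements $k_1',k_2'$, if $k_1\meet k_2\le\bigvee_\alpha a_\alpha$, then from $k_1=(k_1\meet k_2)\join(k_1\meet k_2')\le\bigl(\bigvee_\alpha a_\alpha\bigr)\join k_2'$ and compactness of $k_1$ we get $k_1\le a_{\alpha_1}\join\cdots\join a_{\alpha_n}\join k_2'$, and meeting with $k_2$ yields $k_1\meet k_2\le a_{\alpha_1}\join\cdots\join a_{\alpha_n}$. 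Because $L$ is algebraic, $a\mapsto\{k\in K:k\le a\}$ is an isomorphism of $L$ onto the frame of ideals of the distributive lattice $K$ (indeed $K$ is a generalized Boolean algebra, as each element has a relative complement induced by its complement in $L$); ideal frames of distributive lattices with $0$ are spatial, essentially by the prime ideal theorem, so $L\iso\mathcal O(X)$ with $X$ the space of points of $L$.

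It then remains to verify that $X$ is locally Stone. The compact opens of $X$ are precisely the sets $U_k=\{p:k\in p\}$ with $k\in K$; they form a basis since $K$ join-generates $L$, each $U_k$ is clopen because $k$ is complemented, and $X=\bigcup_{k\in K}U_k$ because $\bigvee K=1$. Thus $X$ is locally compact and zero-dimensional, and it is Hausdorff since it is $T_0$ and has a basis of clopen sets. The substantive part of the argument is this converse direction, and within it the passage from the bare hypotheses ``zero-dimensional'' and ``algebraic'' to spatiality; the step I expect to require the most care is recognizing that compactness forces compact elements to be complemented and hence forces $K$ to be closed under meets, since this is exactly what upgrades $L$ from the ideal frame of a mere join-semilattice to the ideal frame of a distributive lattice, where spatiality is available.
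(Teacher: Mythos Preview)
Your argument is correct and follows the same two-step architecture as the paper: first show $\Omega(X)$ is locally Stone for $X$ locally Stone, then show a locally Stone frame is spatial and that its point space is locally Stone. The forward direction is essentially identical to the paper's.

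Where you diverge is in establishing spatiality. The paper simply cites the fact that algebraic frames are spatial and then works directly with the point space, checking zero-dimensionality, local compactness, and Hausdorffness one by one. You instead unpack spatiality by proving that the compact elements $K$ form a distributive lattice (indeed a generalized boolean algebra) and that $L$ is the ideal frame of $K$, whence spatiality follows from the prime ideal theorem. Both routes hinge on the same key observation---that in a zero-dimensional frame every compact element is complemented---but you use it earlier and more structurally (to get $K$ closed under meets), whereas the paper uses the dual form (a complemented element below a compact one is compact) only at the end, when verifying local compactness of the point space. Your version is more self-contained and makes the role of the prime ideal theorem explicit; the paper's is shorter because it outsources that step to the literature.
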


\begin{proof}
    Let $L$ be a frame.
    First suppose that $L$ is isomorphic to the frame $\Omega(X)$ of open sets of some locally Stone space $X$. Since $X$ is zero-dimensional, so is $\Omega(X)$. Let $U\in\Omega(X)$. Since $X$ is locally compact, for each $x\in U$, there are an open $V$ and a compact $K$ such that $x\in V\subseteq K\subseteq U$. Because $X$ is zero-dimensional, there is a clopen set $G$ such that $x\in G\subseteq V$. Then $G\subseteq K$, and hence $G$ is compact. Thus, $U$ is a union of compact clopens, yielding that $\Omega(X)$ is algebraic.

    Conversely, suppose that $L$ is a locally Stone frame. Then $L$ is spatial because it is algebraic (see, e.g., \cite[Rem.~3.2]{MZ2003}). Thus,  $L\cong\Omega(X)$, where $X$ is the space of points (completely prime filters) of $L$ and the isomorphism $\zeta\:L\to\Omega(X)$ is defined by $\zeta(a)=\{x\in X \mid a\in x\}$. But then $a\in L$ is complemented iff $\zeta(a)$ is clopen. Therefore, $L$ being zero-dimensional implies that $X$ is zero-dimensional.
    
    We show that $X$ is locally compact. Let $U\in\Omega(X)$. Then $U=\zeta(a)$ for some $a\in L$. Since $L$ is algebraic, $a$ is a join of compact elements. Therefore, for each $x\in U$ there is a compact $b\in L$ such that $x\in\zeta(b)\subseteq U$. But $L$ is also zero-dimensional, so $b$ is a join of complemented elements, each of which must be compact because $b$ is compact. Thus, there is a compact complemented $c\in L$ such that $x\in\zeta(c)\subseteq U$. Since $\zeta(c)$ is compact and clopen, $X$ is locally compact. 
    
    It is left to show that $X$ is Hausdorff, but this is obvious since $X$ is $T_0$ and hence Hausdorff because it is zero-dimensional. Thus, every locally Stone frame is isomorphic to the frame of opens of a locally Stone space, and the theorem is proved.
\end{proof}

\begin{remark}
        We recall (see, e.g., \cite[p.~54]{GHKLMS2003}) that a frame $L$ is {\em continuous} if each element is the join of elements way below it. Since for compact $a\in L$, $a\ll b$ iff $a\le b$, every algebraic frame is continuous. If $L$ is zero-dimensional, the converse is also true. Indeed, let $a\ll b$. Then there is $c$ with $a\ll c\ll b$ (see, e.g., \cite[p.~56]{GHKLMS2003}). Since $L$ is zero-dimensional, $c$ is the join of complemented elements below it. Because this set is directed and $a \ll c$, there is a complemented $d$ such that $a\le d\ll b$. Since $d$ is complemented and $d\ll b$, it is compact. Therefore, $b$ is a join of compact elements, and so $L$ is algebraic. We thus obtain that a frame $L$ is locally Stone iff $L$ is continuous and zero-dimensional. Since $L$ is continuous iff its space of points is locally compact (see, e.g., \cite[p.~423]{GHKLMS2003}), defining a locally Stone frame to be a continuous zero-dimensional frame is more in line with the definition of a locally Stone space given above.
\end{remark}

\begin{remark}
        Let $\sf Stone$ be the category of Stone spaces and continuous maps, and let $\sf StoneFrm$ be the category of Stone frames and frame homomorphisms. Then $\sf Stone$ is dually equivalent to $\sf StoneFrm$, and this dual equivalence can be thought of as the frame-theoretic version of the celebrated Stone duality. This dual equivalence readily generalizes to the setting of locally Stone spaces and locally Stone frames. We omit the details because this duality is not essential for the paper.
\end{remark}

In order to prove that $\CCR(\beta M)$ is a locally Stone frame, 
we recall the construction of the Vietoris space (see, e.g., \cite[pp.~111--112]{Joh1982}).
Let $X$ be compact and Hausdorff. The {\em Vietoris space} of $X$ is the set $\mathcal V(X)$ of closed subsets of $X$ whose topology is generated by the subbasis $\{ \Box_U \mid U \in \Omega(X) \} \cup \{ \Diamond_U \mid U \in \Omega(X) \}$, where 
$
\Box_U = \{ F \in \mathcal V(X) \mid F \subseteq U \}
$
and
$
\Diamond_U = \{ F \in \mathcal V(X) \mid F \cap U \ne \varnothing \}.
$
This topology makes $\mathcal V(X)$ a compact Hausdorff space (see, e.g., \cite[p.~244]{Eng1989}).

Moreover, if $X$ is a Stone space, then so is $\mathcal V(X)$ (see, e.g., \cite[p.~380]{Eng1989}). In this case, we can restrict our attention to $\Box_U$ and $\Diamond_U$ where $U$ ranges over clopen subsets of $X$. In addition, for clopen $U$, we have $\Box_{U^c} = (\Diamond_U)^c$ and $\Diamond_{U^c} = (\Box_U)^c$. Furthermore, $\subseteq$ is a closed relation on $\V(X)$ (see, e.g., \cite[p.~57]{Hoff1985}).

Let $\cat{KHaus}$ be the category of compact Hausdorff spaces and continuous functions, and let $\cat{KHaus^C}$ be the category of compact Hausdorff spaces and continuous relations. The Vietoris construction extends to a functor $\cat{KHaus^C} \to \cat{KHaus}$ that is right adjoint to the inclusion functor $\cat{KHaus} \hookrightarrow \cat{KHaus^C}$. The counit of this adjunction is the continuous relation $\ni\:\V(X) \to X$. Thus, for each continuous relation $R\: X \to Y$, there is a unique continuous function $\rho_R\: X \to \V(Y)$ such that ${\ni} \circ \rho_R = R$:
\[
\begin{tikzcd}
X
\arrow{rd}{R}
\arrow[dotted]{d}[swap]{\rho_R}
&
\\
\V(Y)
\arrow{r}[swap]{\ni}
&
Y
\end{tikzcd}
\]
The function $\rho_R$ is defined by $\rho_R(x) = R[x]$ for $x \in X$. These functors restrict to an adjoint pair of functors between the categories $\cat{Stone}$ and $\cat{Stone^C}$. 

As a corollary of \cref{prop: R intersection S}, we find the following unexpected property of the Vietoris construction.

\begin{proposition}
The function $\cap \colon \V(\beta \NN) \times \V(\beta \NN) \to \V(\beta \NN)$ is not continuous.
\end{proposition}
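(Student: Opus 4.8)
The plan is to deduce this from \cref{prop: R intersection S} using the Vietoris adjunction recalled above. The key observation is that a relation $T$ on a Stone space $X$ is continuous if and only if $T[x]$ is closed for every $x$ and the assignment $x \mapsto T[x]$ is a continuous function $X \to \V(X)$. One direction is the statement that $\rho_T$ is continuous whenever $T$ is a continuous relation; for the converse, if $f\colon X \to \V(X)$ is continuous then ${\ni} \circ f$ is a continuous relation (composition of the continuous relations $f$ and $\ni$), and $y \in ({\ni} \circ f)[x]$ iff $y \in f(x)$, so $({\ni} \circ f)[x] = f(x)$. Thus continuity of $x \mapsto T[x]$ into $\V(X)$ forces $T = {\ni} \circ (x \mapsto T[x])$ to be a continuous relation.

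Granting this, I argue by contradiction. Suppose $\cap \colon \V(\beta\NN) \times \V(\beta\NN) \to \V(\beta\NN)$ is continuous, and let $R$ be the continuous relation constructed in the proof of \cref{prop: R intersection S}, so that $R^\dagger$ is continuous as well while $R \cap R^\dagger$ is not. Since $R$ and $R^\dagger$ are continuous, the functions $\rho_R, \rho_{R^\dagger}\colon \beta\NN \to \V(\beta\NN)$ are continuous, hence so is $x \mapsto (\rho_R(x), \rho_{R^\dagger}(x))$ into the product. Composing with $\cap$ gives a continuous function $g\colon \beta\NN \to \V(\beta\NN)$ with
\[
g(x) \;=\; \rho_R(x) \cap \rho_{R^\dagger}(x) \;=\; R[x] \cap R^\dagger[x] \;=\; (R \cap R^\dagger)[x].
\]
By the observation above, $R \cap R^\dagger = {\ni} \circ g$ is then a continuous relation on $\beta\NN$, contradicting \cref{prop: R intersection S}. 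Hence $\cap$ is not continuous.

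There is essentially no serious obstacle here: all the substantive work — exhibiting continuous relations whose intersection fails to be continuous — is already done in \cref{prop: R intersection S}, and the remainder is a routine transport across the Vietoris adjunction. The only point to state carefully is the ``converse'' half of the adjunction, namely that a continuous map into $\V(\beta\NN)$ whose value at each $x$ equals $(R\cap R^\dagger)[x]$ witnesses $R \cap R^\dagger$ itself as a continuous relation; this is immediate once one notes that a relation is literally the family of its vertical slices, so two relations with the same images at every point coincide.
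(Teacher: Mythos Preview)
Your proof is correct and follows essentially the same approach as the paper: assume $\cap$ is continuous, take the continuous relations $R$ and $S=R^\dagger$ from \cref{prop: R intersection S}, form the continuous map $x \mapsto R[x]\cap S[x]$ into $\V(\beta\NN)$, and compose with ${\ni}$ to conclude that $R\cap S$ is continuous, yielding a contradiction. The only cosmetic difference is that you invoke the non-continuity of $R\cap R^\dagger$ directly (as established in the proof of \cref{prop: R intersection S}), whereas the paper phrases the contradiction via $R\cap S = R\wedge S$.
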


\begin{proof}
Assume that $\cap \colon \V(\beta \NN) \times \V(\beta \NN) \to \V(\beta \NN)$ is continuous, and let $R, S \in \CR(\beta \NN)$ be such that $R \wedge S \neq R \cap S$, as in \cref{prop: R intersection S}. Let $\rho_R, \rho_S\: \beta \NN \to \V(\beta \NN)$ be the continuous functions defined by $\rho_R (x) = R[x]$ and $\rho_S (x) = S[x]$ for $x \in \beta \NN$. Let $f\: \beta \NN \to \V(\beta \NN)$ be the function defined by $f(x) = \rho_R(x) \cap \rho_S(x)$ for $x \in \beta \NN$. Then $f$ is continuous by assumption. Finally, let $T$ be the continuous relation defined by $T = {\ni} \circ f$. In other words, $T[x] = f(x)$ for all $x \in \beta \NN$.

We now calculate that $T[x] = f(x) = \rho_R(x) \cap \rho_S(x) = R[x] \cap S[x]$ for all $x \in \beta \NN$. Thus, $T = R \cap S$. It follows that $R \cap S \in \CR(\beta \NN)$, which implies that $R \cap S = R \wedge S$, contradicting our choice of $R$ and $S$. Therefore, $\cap \colon \V(\beta \NN) \times \V(\beta \NN) \to \V(\beta \NN)$ is not continuous.
\end{proof}

Having reviewed the Vietoris construction, we turn to the proof that $\CCR(\beta M)$ is a locally Stone frame. 
Let $X$ and $Y$ be compact Hausdorff spaces. We recall that $\Omega(X)$ denotes the frame of open subsets of $X$, that $\Gamma(X)$ denotes the coframe of closed subsets of $X$, and that $\C(X, Y)$ denotes the set of continuous functions from $X$ to $Y$. We also write $\CR(X, Y)$ for the set of continuous relations from $X$ to $Y$. The set $\CR(X, Y)$ is of course ordered by inclusion, but if $Y$ is a poset, we order $\C(X,Y)$ pointwise: $f \leq g$ if $f(x) \leq g(x)$ for all $x \in X$.

Let $M$ be a set. Define $\kappa \: \CR(\beta M) \to \C(\beta M, \V(\beta M))$ by $\kappa(R) = \rho_R$. Then $\kappa$ is a bijection by the universal mapping property of $\mathcal V$. Also, define $\lambda\: \C(\beta M, \V(\beta M)) \to \C(M, \V(\beta M))$ by $\lambda(f) = f|_M$. Then $\lambda$ is a bijection by the universal mapping property of $\beta$. Finally, define $\mu\: \C(M, \V(\beta M)) \to \Gamma(M \times \beta M)$ by $\mu(g) = \bigcup_m \{m\} \times g(m)$. Then $\mu$ is a well-defined bijection because functions from $M$ to $\V(\beta M)$ correspond to closed subsets of $M \times \beta M$, which is homeomorphic to a disjoint union of copies of $\beta M$. We prove that these bijections are order isomorphisms.

\begin{theorem} \label{thm: CR(beta M)}
For any set $M$, the bijections
$$
\begin{tikzcd}
\CR(\beta M)
\arrow{r}{\kappa}
&
\C(\beta M, \V(\beta M))
\arrow{r}{\lambda}
&
\C(M, \V(\beta M))
\arrow{r}{\mu}
&
\Gamma(M \times \beta M)
\end{tikzcd}
$$
are order isomorphisms.
\end{theorem}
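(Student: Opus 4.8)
The three maps are already known to be bijections, so the plan is to check, for each in turn, that both it and its inverse are order-preserving --- equivalently, that $a\le b$ holds exactly when the image of $a$ lies below the image of $b$. Throughout, $\V(\beta M)$ is ordered by inclusion of closed sets, the sets $\C(\beta M,\V(\beta M))$ and $\C(M,\V(\beta M))$ carry the induced pointwise orders, and $\CR(\beta M)$ and $\Gamma(M\times\beta M)$ are ordered by inclusion. I would dispatch $\kappa$ and $\mu$ first, since these are essentially formal, and leave $\lambda$ for last.

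For $\kappa$: a continuous relation on $\beta M$ is determined by its sections, and $R\subseteq S$ holds iff $R[x]\subseteq S[x]$ for every $x\in\beta M$; since $\rho_R(x)=R[x]$, this is precisely the statement that $R\subseteq S$ iff $\rho_R\le\rho_S$, so $\kappa$ is an order isomorphism. For $\mu$: the points of $M$ are isolated in $\beta M$, so $M$ is discrete and $M\times\beta M$ is the topological sum of the clopen columns $\{m\}\times\beta M$, $m\in M$. The closed set $\mu(g)$ meets the column $\{m\}\times\beta M$ in exactly $\{m\}\times g(m)$, hence $\mu(g)\subseteq\mu(h)$ iff $g(m)\subseteq h(m)$ for all $m$, i.e. iff $g\le h$; thus $\mu$ is an order isomorphism as well.

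For $\lambda$, order-preservation is immediate, since $\lambda(f)=f|_M$ and both orders are pointwise. The point requiring a genuine argument is order-reflection: given $f,g\in\C(\beta M,\V(\beta M))$ with $f|_M\le g|_M$, I need $f(x)\subseteq g(x)$ for all $x\in\beta M$. Here I would use that $\subseteq$ is a closed relation on $\V(\beta M)$ (recalled above, see \cite[p.~57]{Hoff1985}) together with the density of $M$ in $\beta M$: the set $E=\{x\in\beta M : f(x)\subseteq g(x)\}$ is the preimage of the closed relation $\subseteq$ under the continuous map $x\mapsto(f(x),g(x))$, hence closed in $\beta M$; since it contains the dense subset $M$ by hypothesis, $E=\beta M$, i.e. $f\le g$. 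Composing the three order isomorphisms $\kappa$, $\lambda$, $\mu$ then yields the claimed chain.

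The hard part is really just this last step, and it reduces to the two facts cited above about the Vietoris hyperspace and the Stone--\v Cech compactification; everything else is bookkeeping about the various pointwise and inclusion orders.
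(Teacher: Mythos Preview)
Your proposal is correct and follows essentially the same approach as the paper: both treat $\kappa$ and $\mu$ as formal bookkeeping and isolate the only substantive step as order-reflection for $\lambda$, established via the density of $M$ in $\beta M$ together with the fact that $\subseteq$ is a closed relation on $\V(\beta M)$. The only cosmetic difference is that the paper phrases this last step in terms of nets, whereas you take the preimage of the closed relation under $x\mapsto(f(x),g(x))$; these are equivalent.
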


\begin{proof}
First, we reason that for all $R_1, R_2 \in \CR(\beta M)$,
\begin{align*}
\kappa(R_1) \leq \kappa(R_2) &\EV \rho_{R_1} \leq \rho_{R_2} \EV (\forall x \in \beta M)\; \rho_{R_1}(x) \subseteq \rho_{R_2}(x) \\ & \EV (\forall x \in \beta M)\; R_1[x] \subseteq R_2[x] \EV R_1 \subseteq R_2.
\end{align*}
Thus, $\kappa$ is an order isomorphism.

Second, we reason that for all $f_1, f_2 \in \C(\beta M, \V(\beta M))$,
\begin{align*}
\lambda(f_1) \leq \lambda(f_2) &\EV (\forall m \in M)\; f_1(m) \subseteq f_2(m) \\ & \EV (\forall x \in \beta M)\; f_1(x) \subseteq f_2(x) \EV f_1 \leq f_2.
\end{align*}
For the forward direction of the second equivalence, we appeal to the facts that $M$ is dense in $\beta M$ and that the inclusion relation on $\V(\beta M)$ is closed. Indeed, if $\{m_i\}$ is a net that converges to $x$ and $f_1(m_i) \subseteq f_2(m_i)$, then $f_1(x) \subseteq f_2(x)$. Thus, $\lambda$ is an order isomorphism.

Third, we reason that for all $g_1, g_2 \in \C(M, \V(\beta M))$,
\begin{align*}
\mu(g_1) \subseteq \mu(g_2) & \EV \bigcup_{m \in M} \{m\} \times g_1(m) \subseteq \bigcup_{m \in M} \{m\} \times g_2(m) \\ & \EV (\forall m \in M)\; g_1(m) \subseteq g_2(m) \EV g_1 \leq g_2.
\end{align*}
Thus, $\mu$ is an order isomorphism.
\end{proof}

We have shown that $\CR(\beta M)$ is isomorphic to $\Gamma (M \times \beta M)$, and thus, $\CCR(\beta M)$ is isomorphic to $\Omega(M\times\beta M)$. Since $M\times\beta M$ is a locally Stone space, it follows by \cref{thm: locally Stone} that $\CCR(\beta M)$ is a locally Stone frame.

\begin{corollary}
For each complete atomic boolean algebra $B$, the frame $\NO(B)$ is locally Stone. However, $\NO(\wp(\NN))$ is not a coframe, not completely distributive, and not a boolean algebra.
\end{corollary}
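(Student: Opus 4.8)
The plan is to reduce both assertions, via \cref{lem: JT}, to facts about $\CR$ and $\CCR$ of $\beta\NN$ (respectively $\beta M$) that are already available.

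For the first assertion, recall that a complete atomic boolean algebra $B$ is isomorphic to $\wp(M)$ for $M$ the set of its atoms, and that the Stone space of $\wp(M)$ is $\beta M$. By \cref{lem: JT}, $\NO(B)$ is dually isomorphic to $\CR(\beta M)$, which is in turn dually isomorphic to $\CCR(\beta M)$ via complementation; hence $\NO(B) \cong \CCR(\beta M)$. By \cref{thm: CR(beta M)} and complementation, $\CCR(\beta M) \cong \Omega(M \times \beta M)$, and since $M \times \beta M$ is a locally Stone space, \cref{thm: locally Stone} yields that $\NO(B)$ is a locally Stone frame. This paragraph is pure bookkeeping.

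For the second assertion, I would first note that, by \cref{lem: JT}, $\NO(\wp(\NN))$ is dually isomorphic to $\CR(\beta\NN)$, and that by \cref{thm: coframe} the latter is a coframe, so $\NO(\wp(\NN))$ is a frame. Thus $\NO(\wp(\NN))$ fails to be a coframe precisely when $\CR(\beta\NN)$ fails to be a frame, and the crux is to exhibit a failure of the join-infinite distributive law in $\CR(\beta\NN)$. To that end, fix a free ultrafilter $p$ on $\NN$ and set $R = \{(0,p)\}$ and $S_n = \{(0,n)\}$ for $n \in \NN$. A routine verification (of the kind in \cref{lem: DRS interior}) shows that $R$ and all the $S_n$ are continuous relations on $\beta\NN$. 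By \cref{lem:joins}, $\bigvee_n S_n = \cl(\{0\} \times \NN) = \{0\} \times \beta\NN$, which contains $R$; hence $R \wedge \bigvee_n S_n = R$. On the other hand, $R \cap S_n = \varnothing$ because $p \neq n$, so the only continuous relation below both $R$ and $S_n$ is $\varnothing$, whence $R \wedge S_n = \varnothing$ and $\bigvee_n (R \wedge S_n) = \varnothing \neq R$. Therefore $\CR(\beta\NN)$ is not a frame, and $\NO(\wp(\NN))$ is not a coframe.

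The remaining two non-properties then follow formally. The meet-infinite distributive law is the instance of complete distributivity applied to the family $a_{k,0} = a$, $a_{k,1} = b_k$, so a completely distributive complete lattice is in particular a coframe; hence $\NO(\wp(\NN))$ is not completely distributive. And a boolean algebra that is a frame is a complete boolean algebra, while every complete boolean algebra satisfies the join-infinite distributive law and, being self-dual, also the meet-infinite one; since $\NO(\wp(\NN))$ is a frame but not a coframe, it is not a boolean algebra. The substantive point of the whole argument is the choice of $R$ and the $S_n$ (a sequence of ``point'' relations whose join is a whole fibre, together with a single point on that fibre lying off every member of the sequence); the only thing requiring care afterwards is keeping track of which of the dually isomorphic lattices $\NO(\wp(\NN))$, $\CR(\beta\NN)$, $\CCR(\beta\NN)$ one is working in. I do not anticipate a genuine obstacle.
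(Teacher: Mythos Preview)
Your argument is correct. The first paragraph matches the paper's proof exactly. For the second claim the paper is extremely terse: having already established $\NO(\wp(\NN)) \cong \Omega(\NN \times \beta\NN)$, it simply says ``it is sufficient to observe that $\Omega(\NN \times \beta\NN)$ is not a coframe,'' leaving both the observation and the two further non-properties implicit. You instead work directly in $\CR(\beta\NN)$ and exhibit an explicit failure of join-infinite distributivity via the singleton relations $R=\{(0,p)\}$ and $S_n=\{(0,n)\}$; under the isomorphism of \cref{thm: CR(beta M)} this is precisely the standard counterexample (a non-isolated point versus a dense set of isolated points) that shows $\Omega$ of a non-discrete $T_1$ space is not a coframe. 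So the two routes are the same up to which side of the duality $\CR(\beta\NN)\cong\Gamma(\NN\times\beta\NN)$ one stands on, but yours is self-contained and also spells out why ``not a coframe'' entails ``not completely distributive'' and ``not boolean,'' which the paper omits.
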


\begin{proof}
It is well known that the Stone space of $B$ is homeomorphic to $\beta M$ for some set $M$. Thus, $\NO(B)$ is isomorphic to
$\Omega(M \times \beta M)$ and hence is a locally Stone frame. 
For the second claim,  
it is sufficient to observe that  
$\Omega(\mathbb N \times \beta \mathbb N)$ is not a coframe.
\end{proof}

\section{The structure of $\TNO(B)$} \label{sec: tense operators}

In this final section, we study the poset $\TNO(B)$ of tense necessity operators on a boolean algebra $B$. These correspond to interior relations on the Stone space of $B$. As with $\NO(B)$, in general, we only have that $\TNO(B)$ is a bounded meet-semilattice that is not necessarily distributive. But if $B$ is complete, then $\TNO(B)$ is a frame. It is spatial iff $B$ is atomic, and in that case, $\TNO(B)$ is itself an atomic boolean algebra. Dual results hold for the poset $\TPO(B)$ of tense possibility operators.

The next definition goes back to J\'onsson and Tarski \cite{JT1951}. 

\begin{definition}\label{def: tense}
Let $B$ be a boolean algebra.
Two possibility operators $\Diamond_F$ and $\Diamond_P$ on $B$ are {\em conjugate} if 
\[
\Diamond_F a \meet b = 0 \ \ \Longleftrightarrow \ \ \Diamond_P b \meet a =0
\]
for all $a,b\in B$. The operator $\Diamond_F$ is a \emph{tense possibility operator} if it has a conjugate in this sense. In this case, $\Box_F$ is a \emph{tense necessity operator}. Let $\TNO(B)$ be the set of these tense necessity operators, and let $\TPO(B)$ be the set of these tense possibility operators.
\end{definition}

The above definition is 
equivalent to a number of definitions in the literature (see, e.g., \cites{Gol1992,GV1999,Ven2007}). For the reader's convenience, we collect them in the next proposition and add a short proof.

\begin{proposition}\label{prop: tense necessity operator}
Let $B$ be a boolean algebra, and let $\Box_F \in \NO(B)$. The following are equivalent:
\begin{enumerate}
\item $\Box_F$ is a tense necessity operator.
\item There exists a possibility operator $\Diamond_P$ on $B$ such that $\Diamond_P a \leq b$ iff $a \leq \Box_F b$ for all $a, b \in B$.
\item There exists a possibility operator $\Diamond_P$ on $B$ such that $a \le \Box_F \Diamond_P a$ and $\Diamond_P\Box_F a\le a$ for all $a\in B$.
\item There exists a possibility operator $\Diamond_P$ on $B$ such that $a \le \Box_F \Diamond_P a$ and $a\le \Box_P \Diamond_F a$ for all $a\in B$.
\item The meet $\bigwedge\{x \in B \mid a \le \Box_F x\}$ exists for each $a\in B$, and $\Box_F$ preserves these meets.
\end{enumerate}
\end{proposition}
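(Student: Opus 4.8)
The plan is to establish the chain of equivalences $(1)\Leftrightarrow(2)\Leftrightarrow(3)\Leftrightarrow(4)$ together with $(2)\Leftrightarrow(5)$. Two elementary facts will be used throughout: necessity and possibility operators are monotone (for instance, $a\le b$ gives $a=a\meet b$, so $\Box_F a=\Box_F a\meet\Box_F b\le\Box_F b$), and, writing $\Diamond_F:=\lnot\Box_F\lnot$ and $\Box_P:=\lnot\Diamond_P\lnot$, the De Morgan identities $\Box_F b=\lnot\Diamond_F\lnot b$, $\Diamond_F a=\lnot\Box_F\lnot a$, and likewise for $\Box_P,\Diamond_P$. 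I will also use the tautology $c\meet d=0\Leftrightarrow c\le\lnot d$.

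For $(1)\Leftrightarrow(2)$, the idea is to rewrite conjugacy as an adjunction via the above tautology. For a possibility operator $\Diamond_P$, the chain
$a\le\Box_F b \iff a\meet\Diamond_F(\lnot b)=0 \iff \Diamond_P a\meet\lnot b=0 \iff \Diamond_P a\le b$,
whose middle equivalence is conjugacy applied to the pair $\Diamond_F(\lnot b),a$, shows that $\Diamond_P$ is a conjugate of $\Diamond_F$ precisely when it is a left adjoint of $\Box_F$ in the sense of (2); every step is reversible, so both directions follow at once. For $(2)\Leftrightarrow(3)$, I would invoke the standard unit/counit description of a Galois connection: from (2), setting $b=\Diamond_P a$ yields the unit $a\le\Box_F\Diamond_P a$ and setting $a=\Box_F b$ yields the counit $\Diamond_P\Box_F b\le b$; conversely, from (3) and monotonicity, $\Diamond_P a\le b$ gives $a\le\Box_F\Diamond_P a\le\Box_F b$, and $a\le\Box_F b$ gives $\Diamond_P a\le\Diamond_P\Box_F b\le b$.

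For $(3)\Leftrightarrow(4)$, the first clause $a\le\Box_F\Diamond_P a$ is common to both, and for the second I would compute $\Box_P\Diamond_F a=\lnot\Diamond_P\Box_F\lnot a$; hence, quantifying over all $a$ and then replacing $a$ by $\lnot a$, the inequality $a\le\Box_P\Diamond_F a$ (for all $a$) is equivalent to $\Diamond_P\Box_F a\le a$ (for all $a$), which is the second clause of (3). For $(2)\Leftrightarrow(5)$: if (2) holds then $\{x\mid a\le\Box_F x\}$ is the principal up-set of $\Diamond_P a$, using the unit to see $\Diamond_P a$ lies in it, so its meet is $\Diamond_P a$, and $\Box_F$ preserves it because $\Box_F\Diamond_P a$ is the least element of $\{\Box_F x\mid\Diamond_P a\le x\}$. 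Conversely, given (5), define $\Diamond_P a:=\bigwedge\{x\mid a\le\Box_F x\}$; the implication $a\le\Box_F b\Rightarrow\Diamond_P a\le b$ is immediate, and for the reverse implication I would use that $\Box_F$ preserves this meet and that each $\Box_F x$ in the defining family dominates $a$ to conclude $a\le\Box_F\Diamond_P a$, whence $\Diamond_P a\le b\Rightarrow a\le\Box_F\Diamond_P a\le\Box_F b$; the resulting adjunction then forces $\Diamond_P$ to preserve finite joins (for $\Diamond_P(a_1\join a_2)\le c\iff a_1\join a_2\le\Box_F c\iff\Diamond_P a_1\join\Diamond_P a_2\le c$, and $\Diamond_P 0\le c$ always), so it is a genuine possibility operator.

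I expect $(2)\Leftrightarrow(5)$ to be the main obstacle, since it is the one equivalence not already visible at the purely order-theoretic level: the key point is that the clause ``$\Box_F$ preserves these meets'' in (5) is exactly what is needed to recover the unit inequality $a\le\Box_F\Diamond_P a$, without which the meet-preservation would only yield a one-sided bound rather than a genuine adjunction. The remaining care lies in the De Morgan bookkeeping in $(1)\Leftrightarrow(2)$ and $(3)\Leftrightarrow(4)$, namely tracking which variable is complemented and that each step is reversible.
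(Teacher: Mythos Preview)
Your proof is correct and follows essentially the same route as the paper: the same chain $(1)\Leftrightarrow(2)\Leftrightarrow(3)\Leftrightarrow(4)$ together with $(2)\Leftrightarrow(5)$, and the same De Morgan manipulations in $(1)\Leftrightarrow(2)$ and $(3)\Leftrightarrow(4)$. The only difference is one of presentation: the paper dispatches $(2)\Leftrightarrow(3)$ and $(2)\Leftrightarrow(5)$ by citing the standard adjoint/Galois-connection facts in \cite{Wood2004}, whereas you spell out the unit/counit argument and the construction of $\Diamond_P$ from the meets explicitly (including the verification that the resulting $\Diamond_P$ preserves finite joins).
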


\begin{proof}
(1)$\Leftrightarrow$(2): $\Diamond_F$ and $\Diamond_P$ are conjugate iff $\Diamond_F a \wedge b = 0 \Leftrightarrow \Diamond_P b \wedge a = 0$ for all $a,b\in B$. This is equivalent to $\Diamond_F a \le \neg b$ iff $\Diamond_P b \le \neg a$ for all $a,b\in B$. This, in turn, is equivalent to $b \le \Box_F \neg a$ iff $\Diamond_P b \le \neg a$ for all $a,b\in B$, which is equivalent to $b \le \Box_F c$ iff $\Diamond_P b \le c$ for all $b,c\in B$.

(2)$\Leftrightarrow$(3): Condition (2) states that $\Box_F$ has a left adjoint. It is well known that this is equivalent to Condition (3); see, e.g., \cite[p.~16]{Wood2004}.

(3)$\Leftrightarrow$(4): We have that $\Diamond_P\Box_F a\le a$ iff $\neg a \le \neg\Diamond_P\Box_F a$ iff $\neg a \le \Box_P\Diamond_F \neg a$. Since $a\in B$ is arbitrary, $\Diamond_P\Box_F a\le a$ for all $a\in B$ iff $c \le \Box_P\Diamond_F c$ for all $c\in B$. The equivalence follows.

(2)$\Leftrightarrow$(5): It is well known that a necessity operator $\Box_F$ has a left adjoint iff for each $a\in B$ the meet $\bigwedge\{x \in B \mid a \le \Box_F x\}$ exists and $\Box_F$ preserves these meets; see, e.g., \cite[p.~22]{Wood2004}.
\end{proof}

\begin{remark}
We view $\TNO(B)$ as a subposet of $\NO(B)$ and $\TPO(B)$ as a subposet of $\PO(B)$. By \cref{def: tense}, the duality $\Box \mapsto \neg \Box \neg$ between $\NO(B)$ and $\PO(B)$ restricts to a duality between $\TNO(B)$ and $\TPO(B)$.
\end{remark}

\begin{proposition} \label{prop: TNO(B)}
For each boolean algebra $B$, the poset $\TNO(B)$ is a bounded meet-semilattice.
\end{proposition}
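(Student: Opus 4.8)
The plan is to use that $\TNO(B)$ is a subposet of $\NO(B)$, which is already known to be a bounded meet-semilattice by \cref{prop: semilattice}. It then suffices to verify two things: that the top and bottom elements of $\NO(B)$ lie in $\TNO(B)$, and that $\TNO(B)$ is closed under the (componentwise) binary meet operation of $\NO(B)$. Granting these, we are done, since if $P' \subseteq P$ are posets and a meet $a \meet_P b$ (resp.\ the top or bottom of $P$) happens to lie in $P'$, then it is automatically the meet (resp.\ top or bottom) in $P'$.

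To handle the bounds, I would invoke the characterization in \cref{prop: tense necessity operator}(2): a necessity operator is tense exactly when it has a left adjoint that is a possibility operator. The top of $\NO(B)$ sends every element to $1$; its left adjoint is the constant map $a \mapsto 0$, which is trivially a possibility operator. The bottom of $\NO(B)$ sends $1$ to $1$ and every other element to $0$; its left adjoint is the map $\Diamond_P$ with $\Diamond_P 0 = 0$ and $\Diamond_P a = 1$ for all $a \neq 0$, which also preserves finite joins. In both cases the adjunction $\Diamond_P a \le b \Leftrightarrow a \le \Box_F b$ is a quick case check on whether $b = 1$ (resp.\ whether $a = 0$). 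Alternatively, one may argue dually: via the correspondence with interior relations on the Stone space $X$ of $B$ (the analogue of \cref{lem: JT} noted in the introduction), the bounds of $\NO(B)$ correspond to the empty relation and to $X \times X$, both of which are visibly interior.

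For closure under binary meets, suppose $\Box_1, \Box_2 \in \TNO(B)$, with left adjoints $\Diamond_1, \Diamond_2$, which are possibility operators by \cref{prop: tense necessity operator}(2). I claim the componentwise join $\Diamond$, defined by $\Diamond a = \Diamond_1 a \join \Diamond_2 a$, is left adjoint to the componentwise meet $\Box_1 \meet \Box_2$. Indeed, for all $a, b \in B$,
\[
\Diamond a \le b \iff (\Diamond_1 a \le b \text{ and } \Diamond_2 a \le b) \iff (a \le \Box_1 b \text{ and } a \le \Box_2 b) \iff a \le (\Box_1 \meet \Box_2)(b),
\]
using the universal properties of $\join$ and $\meet$ together with the two given adjunctions. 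Since a componentwise join of possibility operators is again a possibility operator (it preserves $0$ and finite joins), \cref{prop: tense necessity operator}(2) yields $\Box_1 \meet \Box_2 \in \TNO(B)$.

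I do not expect a real obstacle here. The only points needing a moment's attention are identifying the left adjoint of the bottom element of $\NO(B)$ correctly --- it is not a ``pointwise least'' operator but the one collapsing every nonzero element to $1$ --- and, in the meet step, observing that it is the \emph{join}, not the meet, of the two left adjoints that does the job, which is exactly what one expects since passage to left adjoints reverses order. One could equally run the whole argument on the topological side, where the corresponding statement is simply that the empty relation, the full relation $X \times X$, and the union of any two interior relations on a Stone space are again interior relations, each of which is immediate from the definitions.
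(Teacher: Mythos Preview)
Your proof is correct and follows essentially the same approach as the paper: both arguments work inside $\NO(B)$ and invoke \cref{prop: tense necessity operator}(2) to show that the componentwise meet of two tense necessity operators is again tense. The paper's proof is a one-liner that leaves the bounds implicit and does not spell out the left adjoint $\Diamond_1 \vee \Diamond_2$, whereas you supply these details explicitly.
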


\begin{proof}
    It is enough to observe that the meet of two tense necessity operators in $\NO(B)$ is a tense necessity operator. This follows from \cref{prop: tense necessity operator}(2). 
\end{proof}

We recall \cite[Def.~5.1]{BGHJ2019} that a continuous relation $R$ on a Stone space $X$ is an {\em interior relation} if $R[U]$ is clopen for each clopen $U\subseteq X$. Note that if $R$ is symmetric, then $R$ is interior iff $R$ is continuous.
Let $\IR(X)$ be the poset of interior relations on $X$, ordered by inclusion. We have the following analog of \cref{lem: JT}. 

\begin{lemma}\label{prop: tno when complete}
    Let $B$ be a boolean algebra, and let $X$ be its Stone space.
    The dual isomorphism of \cref{lem: JT} restricts to a dual isomorphism between the posets $\TNO(B)$ and $\IR(X)$.
\end{lemma}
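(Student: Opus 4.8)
The plan is to leverage \cref{lem: JT}, which already gives a dual isomorphism $\Box \mapsto R_\Box$ between $\NO(B)$ and $\CR(X)$. Since $\TNO(B)$ is a subposet of $\NO(B)$ and $\IR(X)$ is a subposet of $\CR(X)$, it suffices to show that under this correspondence, $\Box_F \in \TNO(B)$ if and only if $R_{\Box_F} \in \IR(X)$; that is, $\Box_F$ has a conjugate possibility operator if and only if the relation $R := R_{\Box_F}$ satisfies the extra condition that $R[U]$ is clopen for every clopen $U \subseteq X$.

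The key steps, in order, are as follows. First, I would recall that under Stone duality $\sigma\: B \to \mathrm{Clop}(X)$, the possibility operator associated to $R$ is $\Diamond_F$ with $\sigma(\Diamond_F a) = R^{-1}[\sigma(a)]$, and dually there is a candidate ``past'' operator $\Diamond_P$ defined by $\sigma(\Diamond_P a) = R[\sigma(a)]$ --- but this is only well-defined as an operator on $B$ precisely when $R[U]$ is clopen for all clopen $U$, i.e., when $R$ is an interior relation. So the bulk of the argument is to verify the two directions. For the direction ($R$ interior $\Rightarrow \Box_F$ tense): define $\Diamond_P$ via $\sigma(\Diamond_P a) = R[\sigma(a)]$; this preserves finite joins since $R[\,\cdot\,]$ does, and one checks the conjugacy condition $\Diamond_F a \meet b = 0 \iff \Diamond_P b \meet a = 0$ by translating to $X$: $R^{-1}[\sigma(a)] \cap \sigma(b) = \varnothing \iff R[\sigma(b)] \cap \sigma(a) = \varnothing$, which is the elementary symmetry $x \in R^{-1}[V], x \in U$ for some witness $\iff$ $(x,y) \in R$ with $x \in U, y \in V$ for some pair $\iff y \in R[U], y \in V$ for some witness. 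For the converse ($\Box_F$ tense $\Rightarrow R$ interior): suppose $\Diamond_F$ has a conjugate $\Diamond_P$. Using \cref{prop: tense necessity operator}(2), $\Diamond_P$ is the left adjoint of $\Box_F$; I would show $\sigma(\Diamond_P a) = R[\sigma(a)]$ for each $a$, and since $\sigma(\Diamond_P a)$ is clopen, $R[U]$ is clopen for each clopen $U$, so $R \in \IR(X)$. To identify $\sigma(\Diamond_P a)$ with $R[\sigma(a)]$, use the conjugacy relation with $b$ ranging over all of $B$: $R[\sigma(a)]$ and $\sigma(\Diamond_P a)$ have the same ``disjointness profile'' against all clopens $\sigma(b)$, and since clopens separate points in the Stone space $X$ and $R[\sigma(a)]$ is closed (as $R$ is continuous, being a continuous relation, $R[x]$ is closed, but one needs $R[\sigma(a)] = \bigcup_{x \in \sigma(a)} R[x]$ closed --- here is where care is needed, see below), one concludes equality.

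The main obstacle I anticipate is the point just flagged: showing that $R[U]$ is \emph{closed} for clopen $U$ purely from the existence of a conjugate, before knowing it is clopen. The clean way around this is not to prove closedness separately but to argue directly that $\sigma(\Diamond_P a)$, which is clopen by construction, equals $R[\sigma(a)]$: for a point $y \notin R[\sigma(a)]$, since $R^\dagger[y] = R^{-1}[y]$ is closed (as $R$ is continuous) and disjoint from the clopen $\sigma(a)$... actually the slicker route is to use conjugacy with the principal clopens: $y \notin R[\sigma(a)]$ means $R^{-1}[\{y\}] \cap \sigma(a) = \varnothing$, and by compactness one finds a clopen $\sigma(b) \ni y$ with $R^{-1}[\sigma(b)] \cap \sigma(a) = \varnothing$, i.e. $\Diamond_F a \meet b = 0$, hence by conjugacy $\Diamond_P b \meet a = 0$, hence $a \le \neg \Diamond_P b$, so $\Diamond_P a \le \Diamond_P \neg \Diamond_P b$... this needs monotonicity and a bit more. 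I would instead lean on \cref{prop: tense necessity operator}(3)–(4), which package the past operator's behavior cleanly, and on the standard fact (from \cite{BGHJ2019} or \cite{JT1951}) that the relation dual to a conjugate pair is exactly the one whose forward image of clopens is clopen; in the write-up I would cite \cite{BGHJ2019}*{Def.~5.1} and the surrounding discussion to keep this step short, since the correspondence between conjugate operator pairs and interior relations is essentially classical Halmos/J\'onsson–Tarski theory. So the proof will be: (i) restrict the bijection of \cref{lem: JT}; (ii) show $R$ interior $\iff$ $R[U]$ clopen gives the conjugate $\Diamond_P$ via $\sigma(\Diamond_P a) = R[\sigma(a)]$, checking conjugacy by the elementary relational symmetry; (iii) conversely, a conjugate forces $R[\sigma(a)] = \sigma(\Diamond_P a)$ clopen; (iv) order-reflection is inherited from \cref{lem: JT}.
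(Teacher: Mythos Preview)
Your approach is the same as the paper's: restrict the bijection of \cref{lem: JT} and show that $\Box_F$ is tense iff $R_{\Box_F}$ is interior, in each direction by relating the conjugate $\Diamond_P$ to the forward-image map $U \mapsto R[U]$. The forward direction is handled correctly.

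For the converse, your sketch gets tangled. You swap $a$ and $b$ midway (from $R^{-1}[\sigma(b)] \cap \sigma(a) = \varnothing$ one obtains $\Diamond_F b \meet a = 0$, not $\Diamond_F a \meet b = 0$), the ensuing chain of inequalities does not close, and you end up deferring to the literature rather than finishing the argument. The obstacle you flag---that $R[U]$ be closed for clopen $U$---is in fact no obstacle at all: a continuous relation on a Stone space is automatically a closed subset of $X \times X$, and the forward image of a compact set under a closed relation is compact, hence closed. The paper exploits this for a much shorter converse: writing the left adjoint explicitly via \cref{prop: tense necessity operator} as
\[
\Diamond_P U \;=\; \bigwedge\{W \in {\rm Clop}(X) \mid U \subseteq \Box_F W\} \;=\; \bigwedge\{W \in {\rm Clop}(X) \mid R[U] \subseteq W\},
\]
one sees that this meet is a clopen contained in $\bigcap\{W \mid R[U] \subseteq W\} = R[U]$; combined with the easy inclusion $R[U] \subseteq \Diamond_P U$ (which follows from $U \subseteq \Box_F \Diamond_P U$), this yields $R[U] = \Diamond_P U$, which is clopen.
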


\begin{proof}
To simplify the notation, we identify $B$ with ${\rm Clop}(X)$. Let $R$ be a continuous relation on $X$, and let $\Box_F\: V \mapsto X\setminus R^{-1}[X\setminus V]$. It is sufficient to show that $R$ is interior iff $\Box_F$ is a tense necessity operator on ${\rm Clop}(X)$. If $R$ is interior, then $\Diamond_P\: U \mapsto R[U]$ is a left adjoint to $\Box_F$, and hence $\Box_F$ is a tense necessity operator on ${\rm Clop}(X)$ by \cref{prop: tense necessity operator}.

Conversely, assume that $\Box_F$ is a tense necessity operator on ${\rm Clop}(X)$. By \cref{prop: tense necessity operator}, $\Box_F$ has a left adjoint $\Diamond_P \: U \mapsto  \bigwedge \{ W \in {\rm Clop}(X) \mid U \subseteq \Box_F W \}$. We show that $R[U] = \Diamond_P U$ for each $U \in {\rm Clop}(X)$. The inclusion $R[U] \subseteq \Diamond_P U$ follows from $U \subseteq \Box_F \Diamond_P U$ because $U \subseteq \Box_F W$ iff $R[U] \subseteq W$ for all $W \in {\rm Clop}(X)$. For the other inclusion, we calculate that
\begin{eqnarray*}
\Diamond_P U &=& \bigwedge \{ W \in {\rm Clop}(X) \mid  R[U] \subseteq W \} \\
&=& {\sf int} \bigcap \{ W \in {\rm Clop}(X) \mid  R[U] \subseteq W \} \\
&=& {\sf int} R[U] \subseteq R[U].
\end{eqnarray*}
Thus, $R[U] = \Diamond_P U \in {\rm Clop}(X)$ for all $U \in {\rm Clop}(X)$, and therefore, $R$ is interior. We conclude that $R$ is interior iff $\Box_F$ is a tense necessity operator on ${\rm Clop}(X)$. The lemma follows.
\end{proof}

The above lemma together with \cref{prop: TNO(B)} yields that $\IR(X)$ is a bounded join-semilattice for each Stone space $X$. We can also see this directly by observing that the union of two interior relations is an interior relation. 

\begin{proposition}
Let $X$ be a Stone space. Then $\IR(X)$ is a bounded join-semilattice that in general is neither distributive nor a lattice. 
\end{proposition}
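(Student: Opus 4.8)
The plan is to mirror the treatment of $\CR(X)$ in \cref{thm: counterexamples}, using the same Stone space $\alpha\NN$ and the same three relations $D$, $R$, $S$ of \cref{def: DRS}. That $\IR(X)$ is a bounded join-semilattice has already been observed: $\varnothing$ and $X\times X$ are interior, and the union of two interior relations is interior. So it remains to exhibit a single Stone space witnessing the failure of the lattice property and of distributivity, and $\alpha\NN$ will serve.

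First I would record that \cref{lem: CR under D} applies verbatim to interior relations. Indeed, for clopen $U\subseteq X$ the relation $D_X\inter(U\times U)$ satisfies $(D_X\inter(U\times U))[V]=U\inter V$ for every clopen $V\subseteq X$, so $D_X\inter(U\times U)$ is not merely continuous but interior. Hence the continuous relations beneath $D_X$ and the interior relations beneath $D_X$ coincide, both being exactly the relations $D_X\inter(U\times U)$ with $U$ clopen. Moreover, by \cref{lem: DRS interior}, $D$, $R$, and $S$ are themselves interior relations on $\alpha\NN$.

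With these two facts in hand, the proof of \cref{thm: counterexamples} transfers with $\CR$ replaced by $\IR$ throughout. The interior relations beneath $D\inter R=\{(2n,2n)\mid n\in\NN\}\union\{(\infty,\infty)\}$ are precisely the finite subsets of $\{(2n,2n)\mid n\in\NN\}$, which have no greatest element, so $D$ and $R$ have no meet in $\IR(\alpha\NN)$; the same argument handles $D$ and $S$. And if $R'\subseteq R$ and $S'\subseteq S$ were interior relations with $R'\union S'=D$, then $R',S'\subseteq D$ would force both $R'$ and $S'$ to be finite, contradicting $R'\union S'=D$. Hence $\IR(\alpha\NN)$ is not a lattice, and it is not a distributive join-semilattice either: $D\le R\join S$ since $D\subseteq R\union S$, yet there are no interior $R'\subseteq R$ and $S'\subseteq S$ with $D=R'\join S'=R'\union S'$. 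I do not anticipate a genuine obstacle here; the only point requiring verification is the interior-relation strengthening of \cref{lem: CR under D}, and that is immediate from the computation of $(D_X\inter(U\times U))[V]$.
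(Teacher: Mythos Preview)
Your proof is correct and follows essentially the same approach as the paper's: both reuse the relations $D$, $R$, $S$ on $\alpha\NN$ from \cref{def: DRS}, note they are interior by \cref{lem: DRS interior}, and then transfer the argument of \cref{thm: counterexamples} verbatim. Your explicit verification that $D_X\cap(U\times U)$ is interior (via the forward-image computation) is slightly more detailed than the paper, which handles this point by observing that the relation is symmetric; otherwise the arguments coincide.
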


\begin{proof}
This proof follows the pattern of the proof of \cref{thm: counterexamples}. The relations $D$, $R$, and $S$ of \cref{def: DRS} are interior relations on $X = \alpha \NN$ by \cref{lem: DRS interior}. As before, we have that $D \cap R = \{(2n, 2n) \mid n \in \NN\}$ and $D \cap S = \{(2n +1, 2n +1) \mid n \in \NN\}$. By \cref{lem: CR under D}, the interior relations under $D \cap R$ are all finite, and it is likewise for $D \cap S$. Thus, there is no greatest interior relation under $D \cap R$ nor under $D \cap S$; the 
meets $D \wedge R$ and $D \wedge S$ do not exist in $\IR(\alpha \NN)$. Therefore, $\IR(\alpha \NN)$ is not a lattice.
Furthermore, no interior relations $R' \subseteq R$ and $S' \subseteq S$ can satisfy $R' \union S' = D$, despite the inclusion $D \subseteq R \cup S$. Therefore, the join-semilattice $\IR(\alpha \NN)$ is not distributive.
\end{proof}

We thus obtain that $\TNO(B)$ is in general neither distributive nor a lattice. When $B$ is complete, $\TNO(B)$ is better behaved.

\begin{proposition} \label{prop: Weaver}
Let $B$ be a complete boolean algebra.
\begin{enumerate}
\item A function $\Box_F:B\to B$ is a tense necessity operator iff for all $\mathcal{S} \subseteq B$, $$\Box_F \Meet \mathcal{S} = \Meet \{\Box_F a \mid a \in \mathcal{S}\}.$$
\item {\em (cf.~\cite{Weav2011}*{Def.~1.2})} There is an order isomorphism between $\TNO(B)$ and the set $\mathcal{TNR}(B)$ of relations $R$ on $B$ such that for all $\mathcal{S}, \T \subseteq B$,
\begin{equation}\label{eq: Weaver condition}\tag{$\dagger$}
\left(\Meet\mathcal{S}, \Meet\T\right) \in R \ \ \Longleftrightarrow \ \ (a, b) \in R \ \ \text{for all $a \in \mathcal{S}$ and $b \in \T$}.
\end{equation}
We order these relations by inclusion. The order isomorphism is $$\Box_F \mapsto \{ (a, b) \in B \times B \mid \Box_F a \join b = 1\}.$$
\end{enumerate}
\end{proposition}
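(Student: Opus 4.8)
My plan is to prove part (1) first and then bootstrap part (2) from it. For (1), the forward implication is immediate: if $\Box_F$ is a tense necessity operator, then by \cref{prop: tense necessity operator} it has a left adjoint, and, $B$ being complete, a right adjoint preserves arbitrary meets — which is exactly the identity in (1) (the empty-meet case merely records $\Box_F 1 = 1$, already built into being a necessity operator). For the converse, if $\Box_F$ satisfies that identity then, taking $\mathcal S$ finite and empty, $\Box_F \in \NO(B)$; and since $B$ is complete, for each $a$ the meet $\Meet\{x \in B \mid a \le \Box_F x\}$ exists and is preserved by $\Box_F$ by hypothesis, so $\Box_F$ is a tense necessity operator by \cref{prop: tense necessity operator}(5).

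For (2), write $R_{\Box_F} = \{(a,b)\in B\times B \mid \Box_F a \join b = 1\}$. The one computational fact I would isolate at the outset is that in a complete boolean algebra $\Meet_i c_i \join \Meet_j d_j = 1$ iff $c_i \join d_j = 1$ for all $i,j$; this follows by rewriting $x \join y = 1$ as $\neg x \le y$ and applying De Morgan, and it remains correct when an index set is empty. Granting this and part (1), one checks that $R_{\Box_F}$ satisfies \eqref{eq: Weaver condition}: the condition $(\Meet\mathcal S, \Meet\T)\in R_{\Box_F}$ unwinds, via part (1), to $\Meet_{a\in\mathcal S}\Box_F a \join \Meet_{b\in\T} b = 1$, which by the isolated fact is equivalent to $\Box_F a \join b = 1$ for all $a\in\mathcal S$ and $b\in\T$, that is, to $(a,b)\in R_{\Box_F}$ for all such $a,b$. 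Hence $\Box_F\mapsto R_{\Box_F}$ does land in $\TNR(B)$.

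It remains to check that this assignment is an order isomorphism. Monotonicity is clear, and if $R_{\Box_F}\subseteq R_{\Box_G}$ then, putting $b=\neg\Box_F a$, we get $(a,\neg\Box_F a)\in R_{\Box_F}\subseteq R_{\Box_G}$, i.e.\ $\Box_G a \join \neg\Box_F a = 1$, i.e.\ $\Box_F a\le\Box_G a$; so $\Box_F\le\Box_G$, which in particular gives injectivity. For surjectivity, fix $R\in\TNR(B)$. Specializing \eqref{eq: Weaver condition} to singleton $\mathcal S$ shows that for each $a$ the set $\{b\mid(a,b)\in R\}$ is a nonempty up-set closed under arbitrary meets, hence the principal up-set of $m_a := \Meet\{b\mid(a,b)\in R\}$. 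Setting $\Box_F a := \neg m_a$ gives $\Box_F a \join b = 1 \Leftrightarrow m_a\le b \Leftrightarrow (a,b)\in R$, so $R_{\Box_F}=R$; and $\Box_F$ preserves arbitrary meets — hence $\Box_F\in\TNO(B)$ by part (1) — because \eqref{eq: Weaver condition} with $\T$ a singleton forces $\{b\mid(\Meet\mathcal S, b)\in R\}$ to be the principal up-set of $\bigvee_{a\in\mathcal S} m_a$, so $m_{\Meet\mathcal S}=\bigvee_a m_a$ and therefore $\Box_F\Meet\mathcal S = \neg\bigvee_a m_a = \Meet_a\neg m_a = \Meet_a\Box_F a$.

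The genuinely delicate point is this last one: verifying that the inverse candidate $\Box_F a = \neg\Meet\{b\mid(a,b)\in R\}$ actually lands in $\TNO(B)$. Everything else is formal manipulation with adjunctions and De Morgan's laws, whereas here one must invoke \eqref{eq: Weaver condition} for non-singleton $\mathcal S$ (with a singleton $\T$) — that is, use strictly more than the singleton/empty consequences of \eqref{eq: Weaver condition} that suffice to make $R_{\Box_F}$ well defined.
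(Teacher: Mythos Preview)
Your proof is correct and follows essentially the same strategy as the paper's: both invoke \cref{prop: tense necessity operator} for part~(1), and for part~(2) both define the inverse candidate by $\Box_F a = \neg\bigwedge\{b\mid (a,b)\in R\}$ and verify it preserves arbitrary meets via condition~\eqref{eq: Weaver condition}. Your execution is somewhat more streamlined---you isolate the computational fact $\bigwedge_i c_i \vee \bigwedge_j d_j = 1 \Leftrightarrow c_i\vee d_j = 1$ for all $i,j$ to handle well-definedness in one line, and you prove order-reflection directly rather than computing both compositions---but the underlying ideas are the same.
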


\begin{proof}
(1) This follows from \cref{prop: tense necessity operator} (also see \cite{Wood2004}*{p.~22}).

(2) Let $\Box_F$ be a tense necessity operator on $B$, and let $R = \{ (a, b) \in B \times B \mid \Box_F a \join b = 1\}$. For all $\mathcal{S}, \T \subseteq B$, we have:
\begin{align*}
\left(\Meet \mathcal{S}, \Meet \T\right) \in R
& \EV
\left(\Box_F \Meet \mathcal{S}\right)  \join \Meet \T =  1 
\\ & \EV
\Meet \{\Box_F a \mid a \in \mathcal{S}\} \join \Meet \T = 1 
\\ & \EV
\Meet \{\Box_F a \join b \mid a \in \mathcal{S},\, b \in \T\} = 1
\\ & \EV 
\Box_F a \join b = 1 \ \ \text{for all $a \in \mathcal{S}$ and $b \in \T$}
\\ & \EV
(a, b) \in R \ \ \text{for all $a \in \mathcal{S}$ and $b \in \T$}.
\end{align*}
Thus, $i\: \Box_F \mapsto \{ (a, b) \in B \times B \mid \Box_F a \join b = 1\}$ is a well-defined function from $\TNO(B)$ to $\TNR(B)$. It is monotone because $\Box_F \leq \Box_F'$ and $\Box_F a \join b = 1$ 
imply that $\Box_F' a \join b = 1$.

Let $R \in \TNR(B)$, and let $\Box_F a = \neg \Meet R[a]$ for all $a \in B$. Let $\mathcal{S} \subseteq B$. Then, the equality $\Box_F \Meet \mathcal{S} = \Meet \{\Box_F a \mid a \in \mathcal{S}\}$ is equivalent to the equality $\Meet R[\Meet\mathcal{S}] = \bigvee \{ \Meet R[a] \mid a \in \mathcal{S}\}$. We prove the latter equality by proving the two inequalities. Before doing so, we note that condition (\ref{eq: Weaver condition}) may be rephrased as the pair of requirements that $b_0 \in R[\Meet \mathcal{S}_0]$ iff $b_0 \in R[a]$ for all $a \in \mathcal{S}_0$ and that $\Meet \T_0 \in R[a_0]$ iff $b \in R[a_0]$ for all $b \in \T_0$, where $a_0, b_0 \in B$ and $\mathcal{S}_0, \T_0 \subseteq B$. Taking $\T_0 = R[a_0]$, we find that $\Meet R[a_0] \in R[a_0]$. Furthermore, taking $\T_0 = R[a_0] \cup \{b_0\}$, we find that $b_0 \geq \Meet R[a_0]$ implies $b_0 \in R[a_0]$. Thus, $R[a_0] = \{b \in B \mid b \geq \Meet R[a_0]\}$. Finally, taking $\mathcal{S}_0 = \{a_1, a_2\}$, we find that $a_1 \leq a_2$ implies $R[a_1] \subseteq R[a_2]$ for all $a_1, a_2 \in B$.

For the first inequality, we reason that $\Meet \mathcal{S} \leq a$ for all $a \in \mathcal{S}$, so $R[\Meet \mathcal{S}] \subseteq R[a]$ for all $a \in \mathcal{S}$, and thus, $\Meet R[a] \leq \Meet R[\Meet \mathcal{S}]$ for all $a \in \mathcal{S}$. Therefore, $\bigvee\{ \Meet R[a] \mid a \in \mathcal{S}\} \leq \Meet R[\Meet \mathcal{S}]$. For the second inequality, we reason that $\bigvee\{ \Meet R[a] \mid a \in \mathcal{S}\} \geq \Meet R[a]$ for all $a \in \mathcal{S}$, and thus, $\bigvee\{ \Meet R[a] \mid a \in \mathcal{S}\} \in R[a]$ for all $a \in \mathcal{S}$. We infer that $\bigvee\{ \Meet R[a] \mid a \in \mathcal{S}\} \in R[\Meet \mathcal{S}]$. Therefore, $\bigvee\{ \Meet R[a] \mid a \in \mathcal{S}\} \geq \Meet R[\Meet \mathcal{S}]$. We conclude that $\bigvee\{ \Meet R[a] \mid a \in \mathcal{S}\} = \Meet R[\Meet \mathcal{S}]$ and hence that $\Meet \{\Box_F a \mid a \in \mathcal{S}\} = \Box_F \Meet \mathcal{S}$. Thus, $j\: R \mapsto \Box_F$, where $\Box_F a = \neg \Meet R[a]$, is a well-defined function from $\TNR(B)$ to $\TNO(B)$.
It is monotone because $R \subseteq R'$ implies that $\neg \Meet R[a] \leq \neg \Meet R'[a]$ for all $a \in B$. 

We now have two monotone functions, $i$ and $j$, between $\TNO(B)$ and $\TNR(B)$, and it remains to show that they are inverses. For each $\Box_F \in \TNO(B)$ and all $a \in B$, we compute that
$
j(i(\Box_F))(a) = \Not \Meet i(\Box_F)[a] = \Not \Meet \{b \in B \mid \Box_F a \join b = 1\} = \Not \Not \Box_F a = \Box_F a.
$
Thus, $j(i(\Box_F)) = \Box_F$ for all $\Box_F \in \TNO(B)$. Also, for each $R \in \TNR(B)$, we compute that
\begin{align*}
i(j(R)) & = \{(a,b) \in B \times B \mid j(R) a \join b =1\}
= \{(a,b) \in B \times B \mid \Not \Meet R[a] \join b =1\}
\\ & =
\{(a,b) \in B \times B \mid \Meet R[a] \leq b \}
=
\{(a,b) \in B \times B \mid b \in R[a]\}
= R.
\end{align*}
Thus, $i$ and $j$ are inverses, and so $i$ is an order isomorphism.
\end{proof}

The set $\TNR(B)$ was essentially first defined by Weaver \cite{Weav2011} in the special case that $B$ is a measure algebra. Recall that the boolean algebra $B$ of measurable sets modulo null sets is complete for any strictly localizable measure space, e.g., for a probability space \cites{Frem2003,Seg1951}. A \emph{measurable relation} on such a measure space is then a binary relation on $B \setminus\{0\}$ such that $\bigvee \mathcal{S}$ is related to $\bigvee \T$ iff some $a \in \mathcal{S}$ is related to some $b \in \T$ \cite{Weav2011}*{Def.~1.2}. Implicitly, $\mathcal{S}$ and $\T$ are nonempty subsets of $B\setminus\{0\}$. This condition extends naturally to arbitrary subsets of $B$ with the effect that nothing is related to $0$ and that $0$ is related to nothing. The elements of $\TNR(B) \iso \TNO(B)$ are then duals of these measurable relations.

We now show that $\TNO(B)$ is a frame for any complete boolean algebra $B$. In this sense, the measurable relations on a strictly localizable measure space obey intuitionistic logic.

\begin{proposition}
For each extremally disconnected Stone space $X$, the poset $\IR(X)$ is a coframe. Therefore, for each complete Boolean algebra $B$, the poset $\TNO(B)$ is a frame. 
\end{proposition}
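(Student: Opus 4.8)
The plan is to mimic the proof of \cref{thm: coframe}, verifying that every relation built in that argument is not merely continuous but interior, and that the lattice operations we invoke are those of $\IR(X)$. Granting this, $\IR(X)$ is a coframe, and then the ``therefore'' is immediate from \cref{prop: tno when complete}: if $B$ is complete, then its Stone space $X$ is extremally disconnected, $\TNO(B)$ is dually isomorphic to $\IR(X)$, and the order dual of a coframe is a frame.

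Before running the argument, I would collect the needed closure facts. Finite joins in $\IR(X)$ are unions, because $(R \union S)[W] = R[W] \union S[W]$ is clopen whenever $R$ and $S$ are interior. The poset $\IR(X)$ is a complete lattice: dually, since by \cref{prop: Weaver}(1) the tense necessity operators on a complete $B$ are precisely the maps that preserve arbitrary meets, and a componentwise meet of such maps again preserves arbitrary meets, the subset $\TNO(B) \subseteq \NO(B)$ is closed under arbitrary meets in $\NO(B)$; a subset of a complete lattice closed under arbitrary meets is itself complete, so $\TNO(B)$, hence $\IR(X)$, is a complete lattice. Finally, for clopen $U, V \subseteq X$ the relations $D_X \inter (U \times U)$ and $D_X \inter (V \times V)$ are not only continuous (\cref{lem: CR under D}) but interior, since $\big(D_X \inter (U \times U)\big)[W] = U \inter W$ is clopen; and a composite of interior relations is interior, since $(S \circ R)[W] = S[R[W]]$.

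With these in hand, the proof of \cref{thm: coframe} transfers essentially word for word, reading $\IR(X)$ for $\CR(X)$, ``interior'' for ``continuous'', and $\Meet$ as the meet in $\IR(X)$: given $R \in \IR(X)$ and $\{S_\alpha\}_{\alpha \in I} \subseteq \IR(X)$, a point $(x,y) \in \big(\Meet_\alpha (R \union S_\alpha)\big) \setminus \big(R \union \Meet_\alpha S_\alpha\big)$ has a basic clopen neighborhood $U \times V$ missing the closed interior relation $R \union \Meet_\alpha S_\alpha$; the interior relation $T = \big(D_X \inter (V \times V)\big) \circ \big(\Meet_\alpha (R \union S_\alpha)\big) \circ \big(D_X \inter (U \times U)\big)$ equals $\big(\Meet_\alpha (R \union S_\alpha)\big) \inter (U \times V)$ by the same computation, hence is disjoint from $R$ and contained in every $S_\alpha$, so $T \subseteq \Meet_\alpha S_\alpha$; but $(x,y) \in T$, a contradiction. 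I do not anticipate a serious obstacle: the only real care needed is the bookkeeping above --- confirming that $\IR(X)$ is a complete lattice and that $T$ together with the diagonal cuts stays inside $\IR(X)$ --- after which the meet-infinite distributive computation is verbatim that of \cref{thm: coframe}.
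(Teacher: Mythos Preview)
Your proposal is correct and follows the same approach as the paper, which simply states that the proof of \cref{thm: coframe} applies in its entirety with $\IR(X)$ in place of $\CR(X)$, noting only that $D_X \inter (U \times U)$ is interior because it is symmetric. You supply more of the bookkeeping the paper leaves implicit---completeness of $\IR(X)$, closure under composition, and that finite joins are unions---but the argument is the same.
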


\begin{proof}
The proof of \cref{thm: coframe} applies in its entirety with $\IR(X)$ in place of $\CR(X)$. \cref{lem: CR under D} implies that the relation $D_X \cap (U \times U)$ is interior for each clopen $U \subseteq X$ because it is symmetric.
\end{proof}

Each set $M$ becomes a strictly localizable measure space when it is equipped with counting measure. In this case, the measure algebra is just the powerset algebra $\wp(M)$, and the measurable relations are just the relations on $M$ \cite{Weav2011}*{Prop.~1.3}. Thus, by \cref{prop: tno when complete}, $\TNO(\wp(M)) \iso \wp(M \times M)$. This isomorphism is also a consequence of the well-known Thomason duality in modal logic \cite{Tho1975}. We provide a third proof, which utilizes Esakia's Lemma (see, e.g., \cite[Lem.~10.27]{CZ1997}).

\begin{lemma} [Esakia's Lemma]
    Let $R$ be a binary relation on a Stone space $X$.
    \begin{enumerate}
        \item Suppose $R[x]$ is closed for each $x\in X$. If $\{ U_\alpha \}$ is a down-directed family of clopens, then $\bigcap_\alpha R^{-1}[U_\alpha] = R^{-1}[\bigcap_\alpha U_\alpha]$.
        \item Suppose $R^{-1}[x]$ is closed for each $x\in X$. If $\{ U_\alpha \}$ is a down-directed family of clopens, then $\bigcap_\alpha R[U_\alpha] = R[\bigcap_\alpha U_\alpha]$.
    \end{enumerate}
\end{lemma}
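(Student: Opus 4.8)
The plan is to prove (1) by a compactness argument and then deduce (2) by applying (1) to the converse relation. For (1), the inclusion $R^{-1}[\bigcap_\alpha U_\alpha] \subseteq \bigcap_\alpha R^{-1}[U_\alpha]$ is immediate, since $V \mapsto R^{-1}[V]$ is monotone and $\bigcap_\alpha U_\alpha \subseteq U_\beta$ for every $\beta$. The content is the reverse inclusion. I would fix $x \in \bigcap_\alpha R^{-1}[U_\alpha]$, so that $R[x] \cap U_\alpha \ne \varnothing$ for every $\alpha$, and consider the family $\{ R[x] \cap U_\alpha \}_\alpha$ of subsets of $X$. Each member is closed, because $R[x]$ is closed by hypothesis and each $U_\alpha$ is clopen, hence closed; each member is nonempty; and the family is down-directed, because $\{U_\alpha\}$ is down-directed and intersecting with $R[x]$ preserves this. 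Since $X$ is a Stone space, it is compact, so a down-directed family of nonempty closed sets, having the finite intersection property, has nonempty intersection. Hence $\varnothing \ne \bigcap_\alpha (R[x] \cap U_\alpha) = R[x] \cap \bigcap_\alpha U_\alpha$, and any point $y$ of this set witnesses $x \in R^{-1}[\bigcap_\alpha U_\alpha]$.

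For (2), I would apply (1) to the relation $S = R^{-1}$. One checks directly from the definitions the two identities $(R^{-1})^{-1}[U] = R[U]$ and $(R^{-1})[x] = R^{-1}[x]$; then the hypothesis that $R^{-1}[x]$ is closed for each $x$ is exactly the hypothesis of (1) for $S$, and the conclusion $\bigcap_\alpha S^{-1}[U_\alpha] = S^{-1}[\bigcap_\alpha U_\alpha]$ furnished by (1) reads $\bigcap_\alpha R[U_\alpha] = R[\bigcap_\alpha U_\alpha]$, as required.

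There is no serious obstacle here; the argument is a routine use of compactness. The only points that require a little care are that the family $\{R[x] \cap U_\alpha\}$ is genuinely down-directed (this, via finite intersections, is what one actually feeds into the finite-intersection-property formulation of compactness), that closedness of $R[x]$ in $X$ is what makes each $R[x] \cap U_\alpha$ closed, and that the indexing family is understood to be nonempty (so that $x \in \bigcap_\alpha R^{-1}[U_\alpha]$ already forces $R[x] \ne \varnothing$). No use is made of the clopenness of the $U_\alpha$ beyond their being closed, nor of zero-dimensionality or Hausdorffness of $X$; only compactness is needed.
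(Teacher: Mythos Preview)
Your proof is correct and is the standard compactness argument for Esakia's Lemma. The paper does not spell out a proof but cites the literature for (1) and remarks that (2) is proved similarly; your direct argument, including the reduction of (2) to (1) via the converse relation, is essentially what those references contain.
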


\begin{proof}
    For (1) we refer to \cite[Lem.~3.3.12]{Esakia2019} and \cite[Lem.~2.17]{BBH15}, and (2) is proved similarly.
\end{proof}
   
\begin{theorem} \label{thm: rep for IR(M)}
Let $M$ be a set. There is an order isomorphism $f\: \IR(\beta M) \to \wp(M \times M)$ that is defined by $f(R) = R \cap (M \times M)$.
Its inverse $g\: \wp(M \times M) \to \IR(\beta M)$ is defined by $g(Q) = \cl\, Q$.
\end{theorem}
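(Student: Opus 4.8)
The plan is to show that $f$ and $g$ are well-defined, mutually inverse, and monotone; since monotonicity of both is immediate (if $R_1\subseteq R_2$ then $R_1\cap(M\times M)\subseteq R_2\cap(M\times M)$, and if $Q_1\subseteq Q_2$ then $\cl Q_1\subseteq\cl Q_2$), a pair of inverse monotone bijections is automatically an order isomorphism. So the substantive content is three claims: that $g(Q)=\cl Q$ is an interior relation for every $Q\subseteq M\times M$; that $f(g(Q))=Q$; and that $g(f(R))=R$.

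For the first claim, $\cl Q$ is a closed subset of $\beta M\times\beta M$, so it has closed point-images. By \cref{lem: auxiliary}, $(\cl Q)^{-1}[V]=\cl(Q^{-1}[V])$ for every clopen $V\subseteq\beta M$; since $Q^{-1}[V]\subseteq M$ and the closure in $\beta M$ of a subset of $M$ is clopen, $(\cl Q)^{-1}[V]$ is clopen, so $\cl Q$ is continuous. Applying the coordinate-swap homeomorphism of $\beta M\times\beta M$ gives $(\cl Q)^\dagger=\cl(Q^\dagger)$, and then \cref{lem: auxiliary} again yields $(\cl Q)[V]=((\cl Q)^\dagger)^{-1}[V]=\cl(Q[V])$, which is clopen for the same reason. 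Hence $\cl Q\in\IR(\beta M)$. For the second claim, $(\cl Q)\cap(M\times M)=Q$, because each pair $(m,n)$ with $m,n\in M$ is an isolated point of $\beta M\times\beta M$, so if it lies in $\cl Q$ it already lies in $Q$.

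The heart of the matter, and the step I expect to be the main obstacle, is $g(f(R))=R$, i.e., $R=\cl(R\cap(M\times M))$ for every interior relation $R$ on $\beta M$; this is precisely where the interior hypothesis, rather than mere continuity, is needed, and indeed \cref{prop: R intersection S} shows $\CR(\beta\NN)$ fails the analogous statement. The inclusion $\cl(R\cap(M\times M))\subseteq R$ is free since $R$ is closed. For the reverse, fix $(x,y)\in R$ and a basic clopen neighborhood $\cl A\times\cl B$ of it with $A,B\subseteq M$ (recall the clopens of $\beta M$ are exactly the sets $\cl A$, $A\subseteq M$); I must produce $(m,n)\in R$ with $m\in A$ and $n\in B$. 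Since $y\in R[x]\cap\cl B$, the set $\cl A\cap R^{-1}[\cl B]$ is clopen and contains $x$, hence is nonempty, hence contains some $m\in M$ by density of $M$; then $m\in\cl A\cap M=A$ and $R[m]\cap\cl B\ne\varnothing$. Now $\{m\}$ is clopen, so $R[m]$ is clopen because $R$ is \emph{interior}; therefore $R[m]\cap\cl B$ is a nonempty clopen, so it contains some $n\in M$, and this $n$ satisfies $n\in\cl B\cap M=B$ and $(m,n)\in R$. Hence $(x,y)\in\cl(R\cap(M\times M))$, as required.

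An alternative to the last paragraph routes through Esakia's Lemma, which shows that $R^{-1}[z]$ is closed for every $z$ (hence that $R^\dagger$ is also interior), so that both $R[x]$ and $\bigl(\cl(R\cap(M\times M))\bigr)[x]$ evaluate to $\bigcap\{\cl(R[U]\cap M)\mid x\in U\ \text{clopen}\}$; but the direct argument above is shorter, so I would present that one.
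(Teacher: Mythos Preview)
Your proof is correct, and the core step differs genuinely from the paper's. Both arguments treat the well-definedness of $g$ via \cref{lem: auxiliary} and the identity $f(g(Q))=Q$ via isolation of points of $M$ in $\beta M$, just as you do. The divergence is in establishing $R=\cl(R\cap(M\times M))$ for $R\in\IR(\beta M)$. The paper proceeds in three stages: first it shows $R[m]=P[m]$ for $m\in M$ (using that $R[m]$ is clopen), then lifts this to $R[U]\subseteq P[U]$ for clopen $U$ (using that continuous relations satisfy $R[\cl A]=\cl R[A]$), and finally invokes Esakia's Lemma on both $R$ and $P$ to pass from clopen neighborhoods of $x$ down to the singleton $\{x\}$. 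Your argument bypasses all of this: you verify directly that every basic clopen neighborhood of a point $(x,y)\in R$ meets $R\cap(M\times M)$, using continuity of $R$ to find $m\in M$ in the first coordinate and then the interior property to find $n\in M$ in the second. This is shorter and more elementary, avoiding Esakia's Lemma entirely; the paper's route, while longer, makes the pointwise structure $R[x]=P[x]$ explicit and perhaps generalizes more readily to settings where one cares about fibers rather than the relation as a subset. Your closing remark that the Esakia route is an alternative is accurate and matches exactly what the paper does.
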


\begin{proof}
It is clear that $f[\IR(\beta M)] \subseteq \wp(M \times M)$. To show that $g[\wp(M \times M)] \subseteq \IR(\beta M)$, let $Q \subseteq M \times M$
and $P$ be its closure in $\beta M \times \beta M$. Clearly $P$ is a closed relation. To see that it is interior, it is enough to show that $P^{-1}[V]$ and $P[V]$ are clopen for each clopen $V \subseteq \beta M$. Since the two proofs are similar, we only prove that $P^{-1}[V]$ is clopen. 
By \cref{lem: auxiliary}, 
\[
P^{-1}[V] = \cl\, Q^{-1}[V] =
\cl\, Q^{-1}[V \cap M],
\]
and thus, $P^{-1}[V]$ is clopen for each clopen $V \subseteq \beta M$. Symmetrically, $P[V]=\cl\,Q[V\cap M]$, and thus, $P[V]$ is clopen for each clopen $V \subseteq \beta M$. We conclude that $P \in \IR(\beta M)$. 

Thus, $f$ and $g$ are indeed functions $\IR(\beta M) \to \wp(M \times M)$ and $\wp(M \times M) \to \IR(\beta M)$, respectively. Both are clearly monotone. Moreover, for all $Q \in \wp(M \times M)$,
\[
f(g(Q)) = \cl Q \cap (M\times M) = Q.
\]
It is left to show that $g(f(R))=R$ for all $R \in \IR(\beta M)$.
In other words, it is left to show that $R = \cl(R \cap (M\times M))$ for all $R \in \IR(\beta M)$. 

Let $R \in \IR(\beta M)$. Let 
$Q = R \cap (M \times M)$ and $P = \cl Q$. Clearly $P \subseteq R$. To prove the other inclusion, we show that $R[x] \subseteq P[x]$ for all $x \in \beta M$.
For each $x \in M$, since $\{x\}$ is clopen and $R$ is interior, $R[x]$ is clopen, and thus
$$
R[x] = \cl(R[x] \cap M) = \cl(Q[x]) = P[x].
$$
Since $R$ is continuous, we have $R[\cl A] = \cl R[A]$ for each $A\subseteq \beta M$ \cite[Thm.~3.1.2]{Esakia2019}. Hence, for each clopen $U \subseteq \beta M$,
\begin{align*}
R[U] &= R[ \cl(U\cap M) ] = \cl R[U\cap M] = \cl \left(\bigcup\{ R[x] \mid x \in U \cap M\}\right) \\ & = \cl \left(\bigcup\{ P[x] \mid x \in U \cap M\}\right) = \cl P[U \cap M]  \subseteq P[U].
\end{align*}
Applying Esakia's Lemma to both $R$ and $P$,
we conclude that 
\begin{eqnarray*}
    R[x] &=& R\left[ \bigcap \{ U \in {\rm Clop}(X) \mid x \in U \}\right] = \bigcap \{ R[U] \mid x \in U \} \\
    & \subseteq & \bigcap \{ P[U] \mid x \in U \} = P\left[ \bigcap \{ U \in {\rm Clop}(X) \mid x \in U \} \right] \\
    &=& P[x]
\end{eqnarray*}
for all $x \in \beta M$. Thus, $R \subseteq P$, and therefore, $R = P = \cl\, Q = \cl(R \cap (M \times M)) = g(f(R))$.
We have shown that $f$ and $g$ are inverses, and hence $f\: \IR(\beta M) \to \wp(M \times M)$ is an order isomorphism.
\end{proof}

\begin{remark}
The composition of the order isomorphism of \cref{thm: rep for IR(M)} and
the dual isomorphism of \cref{prop: tno when complete} is given by
$$
\Box_F \ \mapsto \ \{(x,y) \in M \times M \mid y \in \Box_F ( M \setminus \{x\})\}.
$$
\end{remark}

Combining \cref{prop: tno when complete,thm: rep for IR(M)}, we obtain the following.

\begin{corollary}\label{cor: iso for TNO}
Let $B$ be a complete boolean algebra. The following are equivalent:
\begin{enumerate}
    \item $B$ is atomic.
    \item $\TNO(B)$ is a complete atomic boolean algebra.
    \item $\TNO(B)$ is a spatial frame.
\end{enumerate}
\end{corollary}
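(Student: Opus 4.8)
The plan is to establish the cycle of implications (1)$\Rightarrow$(2)$\Rightarrow$(3)$\Rightarrow$(1). Write $X$ for the Stone space of $B$; since $B$ is complete, $X$ is extremally disconnected, and by \cref{prop: tno when complete} there is a dual isomorphism $\TNO(B)\cong\IR(X)$, so everything can be phrased in terms of $\IR(X)$.

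For (1)$\Rightarrow$(2): a complete atomic $B$ is isomorphic to $\wp(M)$ for $M$ its set of atoms, so $X$ is homeomorphic to $\beta M$; then \cref{thm: rep for IR(M)} gives $\IR(X)\cong\wp(M\times M)$, and since complementation is a dual automorphism of $\wp(M\times M)$, the dual $\TNO(B)$ of $\IR(X)$ is again isomorphic to $\wp(M\times M)$, hence is a complete atomic boolean algebra. For (2)$\Rightarrow$(3): every complete boolean algebra satisfies the join-infinite distributive law and so is a frame, and a complete atomic boolean algebra $\wp(Y)$ is spatial because each of its elements is the meet of the meet-prime elements $Y\setminus\{y\}$ lying above it.

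For (3)$\Rightarrow$(1) I would argue contrapositively. First I would record, by the argument of \cref{lem: join-prime}, that the join-prime elements of $\IR(X)$ are exactly the singletons $\{(x,y)\}$ with $x$ and $y$ isolated points of $X$; the one new ingredient is that every clopen subset of $X\times X$ is an interior relation, which holds because the image of a clopen subset of $X\times X$ under either coordinate projection is clopen --- open since projections are open maps, and closed since it is the continuous image of a compact subset. Now suppose $B$ is not atomic; then some nonzero element of $B\cong{\rm Clop}(X)$ has no atom below it, and since the atoms of ${\rm Clop}(X)$ are precisely the singleton clopens (i.e.\ the isolated points of $X$), this means there is a nonempty clopen $U\subseteq X$ with no isolated point. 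The rectangle $U\times U$ is then a nonempty element of $\IR(X)$ that contains no join-prime element of $\IR(X)$, since any $\{(x,y)\}\subseteq U\times U$ would put the isolated point $x$ into $U$. Hence $\IR(X)$ is not join-generated by its join-prime elements, and because $\TNO(B)\cong\IR(X)^{\rm op}$ while a frame is spatial exactly when it is meet-generated by its meet-prime elements, we conclude that $\TNO(B)$ is not spatial.

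The main obstacle is this last direction, and within it the join-prime characterization for $\IR(X)$ --- in particular the verification that clopen relations on $X$ are interior relations; the remaining implications are just bookkeeping with the isomorphisms already in hand, together with standard facts about complete atomic boolean algebras.
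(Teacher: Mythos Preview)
Your proof is correct and follows essentially the same route as the paper: the same cycle (1)$\Rightarrow$(2)$\Rightarrow$(3)$\Rightarrow$(1), the same use of \cref{thm: rep for IR(M)} for the first implication, and the same join-prime analysis of $\IR(X)$ for the last. The only cosmetic differences are that you argue (3)$\Rightarrow$(1) contrapositively rather than directly, and you make explicit the fact that clopen subsets of $X\times X$ are interior relations (needed to run the argument of \cref{lem: join-prime} inside $\IR(X)$), which the paper handles by remarking that finite joins in $\IR(X)$ agree with those in $\CR(X)$.
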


\begin{proof}
(1)$\Rightarrow$(2): If $B$ is atomic, then $B$ is isomorphic to $\wp(M)$ for some set $M$. Therefore, the Stone space of $B$ is homeomorphic to $\beta M$, so  \cref{prop: tno when complete} implies that $\TNO(B)$ is dually isomorphic to  $\IR(\beta M)$. But the latter is isomorphic to $\wp(M \times M)$ by \cref{thm: rep for IR(M)}. Since $\wp(M \times M)$ is dually isomorphic to itself, we conclude that $\TNO(B)$ is complete and atomic.

(2)$\Rightarrow$(3): Every complete atomic boolean algebra is a spatial frame.

(3)$\Rightarrow$(1): 
Let $X$ be the Stone space of $B$. Then $X$ is extremally disconnected, and $\TNO(B)$ is dually isomorphic to $\IR(X)$ by \cref{prop: tno when complete}. Since $\TNO(B)$ is spatial, $\IR(X)$ is join-generated by join-primes. Joins in $\IR(X)$ coincide with those in $\CR(X)$, and hence we may apply \cref{lem: join-prime} to conclude that each join-prime of $\IR(X)$ is a singleton $\{(x,y)\}$ such that $x$ is isolated. (Although it is not needed for the proof, $y$ is also isolated because $R$ is interior.) Thus, for each clopen $U\subseteq X$, there exists a pair $(x,y)\in U\times U$ such that $x$ is isolated. Therefore, isolated points are dense in $X$, and hence $B$ is atomic.
\end{proof}

\begin{remark}
    Consider $M$ as a discrete topological space. Then $\TNO(\wp(M))$ is dually isomorphic to $\Gamma(M \times M)$ by the proof of \cref{cor: iso for TNO}, and $\NO(\wp(M))$ is dually isomorphic to $\Gamma(M \times \beta M)$ by \cref{thm: CR(beta M)}. It is the frame of subordination relations on $\wp(M)$ that is dually isomorphic to $\Gamma(\beta M \times \beta M)$ (see \cite[Sec.~2]{BBSV2017}).
\end{remark}

Our results for $\TNO(B)$ throughout \cref{sec: tense operators} have obvious dual analogues for $\TPO(B)$. A category-theoretic characterization of complete atomic boolean algebras and tense possibility operators was given in \cite{Kor2025}.

We conclude by outlining some possible directions for future work.
\begin{enumerate}
    \item As we pointed out in the introduction, $\NO(B)$ can be thought of as the set of endomorphisms in the category $\cat{BA^H}$ of boolean algebras and hemimorphisms between them. More generally, it is natural to study the set $\mathrm{Hom}_{\cat{BA^H}}(B_1,B_2)$. By an obvious generalization of \cref{prop: semilattice}, $\mathrm{Hom}_{\cat{BA^H}}(B_1,B_2)$ is a bounded meet-semilattice, and it is natural to study further properties of $\mathrm{Hom}_{\cat{BA^H}}(B_1,B_2)$.
    \item It is also natural to generalize our considerations to the category $\cat{MSL}$ of meet-semilattices (with $1$) and meet-semilattice homomorphisms (preserving $1$). The necessity operators on $S\in\cat{MSL}$ are simply the endomorphisms of $S$ in $\cat{MSL}$. Observe that \cref{prop: semilattice} holds for an arbitrary meet-semilattice, and \cref{prop: NO of CBA} for an arbitrary complete meet-semilattice. It remains open whether $\NO(S)$ is distributive or a (zero-dimensional) frame provided that $S$ is complete. 
    \item It would also be interesting to study tense operators in a more general setting. Since it is more natural to have conjugate operators on a lattice rather than a semilattice, the first step would be to study the structure of $\TNO(L)$ for an arbitrary (bounded) lattice $L$.  
\end{enumerate}

\bibliographystyle{amsplain}
\bibliography{references}

\end{document}